\theoremstyle{plain}
\newtheorem{theorem}{Theorem}[section]
\newtheorem{lemma}[theorem]{Lemma}
\newtheorem{corollary}[theorem]{Corollary}
\newtheorem{proposition}[theorem]{Proposition}
\theoremstyle{definition}
\newtheorem{definition}[theorem]{Definition}
\newtheorem{example}[theorem]{Example}
\newtheorem{remark}[theorem]{Remark}
\theoremstyle{remark}
\newcounter{zahl}
\def\theenumi{(\alph{enumi})}
\def\p@enumii{\theenumi}
\newcommand{\DS}{\displaystyle}
\newcommand{\cA}{\mathcal{A}}
\newcommand{\cF}{\mathcal{F}}
\newcommand{\cG}{\mathcal{G}}
\newcommand{\cO}{\mathcal{O}}
\DeclareMathOperator{\Spec}{Spec}
\DeclareMathOperator{\charakt}{char}
\DeclareMathOperator{\rk}{rk}
\renewcommand{\phi}{\varphi}
\renewcommand{\epsilon}{\varepsilon}
\newcommand{\BOne} {{\mathchoice{\hbox{\rm1\kern-2.7pt l\kern.9pt}}
                              {\hbox{\rm1\kern-2.7pt l\kern.9pt}}
                              {\hbox{\scriptsize\rm1\kern-2.3pt l\kern.4pt}}
                              {\hbox{\scriptsize\rm1\kern-2.4pt l\kern.5pt}}}}
\newcommand{\BD}{{\mathbb{D}}}
\newcommand{\BP}{{\mathbb{P}}}
\newcommand{\BZ}{{\mathbb{Z}}}
\newcommand{\CA}{{\cal{A}}}
\newcommand{\CF}{{\cal{F}}}
\newcommand{\CG}{{\cal{G}}}
\newcommand{\CO}{{\cal{O}}}
\newcommand{\CP}{{\cal{P}}}
\newcommand{\CT}{{\cal{T}}}
\let\setminus\smallsetminus
\newcommand{\ol}[1]{{\overline{#1}}}
\newcommand{\invlim}[1][]{\ifthenelse{\equal{#1}{}}
{\DS \lim_{\longleftarrow}}
{\DS \lim_{\underset{#1}{\longleftarrow}}}
}
\newcommand{\dirlim}[1][]{\ifthenelse{\equal{#1}{}}
{\DS \lim_{\longrightarrow}}
{\DS \lim_{\underset{#1}{\longrightarrow}}}
}
\newcommand{\dotBD}{\vbox{\hbox{\kern2pt\bf.}\vskip-4.5pt\hbox{$\BD$}}}
\newcounter{commentcounter}
\def\?{\ 
{\bf\color{red}???}\ 
\immediate\write16{}
\immediate\write16{Warning: There was still a question mark . . . }
\immediate\write16{}}
\newbox\mybox
\def\arrover#1{\mathrel{
       \setbox\mybox=\hbox spread 1.4em{\hfil$\scriptstyle#1$\hfil}
       \vbox{\offinterlineskip\copy\mybox
             \hbox to\wd\mybox{\rightarrowfill}}}}
\begin{document}

\author{Somayeh Habibi and M. E. Arasteh Rad}

\title{On The Motive of G-bundles}

\maketitle

\begin{abstract}

Let $G$ be a reductive algebraic group over a perfect field $k$ and $\cG$ a $G$-bundle over a scheme $X/k$. The main aim of this article is to study the motive associated with $\cG$, inside the Veovodsky Motivic categories. We consider the case that $\charakt k=0$ (resp. $\charakt k\geq 0$), the motive associated to $X$ is geometrically mixed Tate (resp. geometrically cellular) and $\cG$ is locally trivial for the Zariski (resp. \'etale) topology on $X$ and show that the motive of $\cG$ is geometrically mixed Tate. Moreover for a general $X$ we construct a nested filtration on the motive associated to $\cG$ in terms of weight polytopes. Along the way we give some applications and examples.

\noindent
{\it Mathematics Subject Classification (2000)\/}: 
14F42   
(14C25, 
20G15,
14L30,
14M17,
14D99)
\end{abstract}

\bigskip

%
%

\bigskip
\section{Introduction}

Let $k$ be a perfect field and let $X$ be a scheme of finite type over $k$. In this paper we study motives of certain fiber bundles over $X$. 
Specially we are interested in the motives of $G$-bundles over a base scheme $X$ for a connected reductive group $G$ over $k$.
\newline Let $\cG$ be a $G$-bundle as above. When $\charakt k=0$ and the motive associated to $X$ is geometrically mixed Tate, we show that the motive of any Zariski locally trivial $G$-bundle $\cG$ is also geometrically mixed Tate. In addition, when we restrict ourselves to the case that $X$ is geometrically cellular we may treat the positive characteristic case as well and moreover we are not required to assume that $\cG$ is Zariski locally trivial. Although we are mainly interested in the case that the motive associated to $X$ is mixed Tate, our method will also produce a nested filtration on the motive of $\cG$ over a general base. In particular we apply this to the case that $X$ is a smooth projective curve.
\newline
As a consequence of this discussion we observe that the motive of a split reductive group is mixed Tate. Using a recent result of McNinch \cite{McN} we will see that a similar fact holds for certain Parahoric group schemes. The former case has already been studied by Biglari, see \cite{Big} where he makes the computations with rational coefficients. Moreover A. Huber and B. Kahn have studied the motive of split reductive groups using the theory of slice filtration, see \cite{H-K}. Let us mention that in \cite[section 8]{H-K} they also produce a filtration for split torus bundles. 
\newline
In this paper our approach is essentially based on several geometric observations, namely we introduce a motivic version of the Leray-Hirsch theorem and we implement the combinatorial tools provided by the theory of wonderful compactification of semi-simple algebraic groups of adjoint type, see \cite{DeConciniProcesi}. 
 
Let us have a brief look at the content of this article. In section \ref{Notation and Conventions} we fix notation and conventions. In section \ref{Motive of cellular fibrations} we introduce the notion of motivic relative cellular varieties and prove some of their basic properties. Furthermore we implement a result of B. Kahn which describes the (geometric) motive of cellular varieties (see \cite{BKahn}) to establish a motivic version of Leray-Hirsch theorem for cellular fibrations. In the next section we recall some results about the geometry of wonderful compactifiation of a reductive group of adjoint type. Subsequently we see that $G\times G$-orbit closures are (motivic) cellular. Using this we study the case that $\charakt k=0$ and $X$ is geometrically mixed Tate (resp. $\charakt k >0$ and $X$ is geometrically cellular). In addition we introduce particular applications. Finally in section \ref{Filtration on the motive of G-bundles} we discuss the case that the base scheme $X$ is not necessarily mixed Tate and produce a filtration on the motive associated to $\cG$. Meanwhile we apply this to the case where $X$ is a projective curve.

\bigskip
\noindent
\emph{Acknowledgement.} We are grateful to L. Barbieri Viale and B.Kahn for their helps and comments on the earlier draft of this article. We thank J. Bagalkote  for editing and to J. Scholbach for the helpful discussions regarding this work.

\tableofcontents 

\section{Notation and Conventions}\label{Notation and Conventions}

Throughout this article we assume that $k$ is a perfect field. We denote by $Sch_k$ (resp. $Sm_k$) the category of schemes (resp. smooth schemes) of finite type over $k$. 

For $X$ in $\CO b(Sch_k)$, let $CH_i(X)$ and $CH^i(X)$ denote Fulton's $i$-th Chow groups and let $CH_\ast(X):=\oplus_i CH_i(X)$ (resp. $CH^\ast(X):=\oplus_i CH^i(X)$).  

We denote by $Sch_k^{fr}$  (resp. $Sm_k^{fr}$) the full subcategory of $Sch_k$ (resp. $Sm_k$) consisting of those $X \in \CO b(Sch_k)$ (resp. $X \in \CO b(Sm_k)$) that $CH_\ast(X)$ is free of finite rank over $\BZ$. 

\begin{remark}\label{RemBassConjecture}
The category $Sm_k^{fr}$ need not be a tensor category. Even after passing to the coefficients in $\mathbb{Q}$, it is not obvious to the authors that whether the full subcategory of $Sm_k$ consisting of objects $X$ with $rk_{\mathbb{Q}}K_0(X)< \infty$ is a tensor category or not. However if one assumes the Bass conjecture, then this is a trivial consequence. 
\end{remark}

To denote the motivic categories over $k$, such as $DM_{gm}^{eff}(k)$, $DM_{-}^{eff}(k)$, $DM_{-}^{eff}(k) \otimes\mathbb{Q}$ and etc., and the functors $M:Sch_k\rightarrow DM_{gm}^{eff}(k)$ and $M^c:Sch_k\rightarrow DM_{gm}^{eff}(k)$ constructed by Voevodsky, we use the same notation that was introduced by him in \cite{VV}. These constructions were recently developed by  Cisincki and Deglise. See \cite{CD} and Voevodsky \cite{VVI}, where they construct the triangulated category of motives over a general base scheme $S$.

\begin{definition}\label{DefMixedTate}
The thick subcategory of $DM_{gm}^{eff}(k)$, generated by $\mathbb{Z}(0)$ and $\mathbb{Z}(1)$ is called \emph{the category of mixed Tate motives} and we denote it by $TDM_{gm}^{eff}(k)$. Any object of $TDM_{gm}^{eff}(k)$ is called a
 \emph{mixed Tate motive}. A motive $M$ is \emph{geometrically mixed Tate} if it becomes mixed Tate over $\overline{k}$.
\end{definition}

We simply denote $A\to B \to C$ to denote a distinguished triangle $A \to B \to C \to A[1]$ in either of the above categories. \\

\textbf{CAUTION}: Throughout this article we either assume that $k$ admits resolution of singularities or coefficients in $\mathbb{Q}$. 

For the definition of the geometric motives with compact support in positive characteristic we refer to \cite{H-K}.

Let us now move to the algebraic group theory side. Let $G$ be a reductive group over $k$. We denote by $G^s$ the semi-simple quotient of $G$ and by $G^{ad}$ the adjoint group of $G$. 

Consider an algebraic closure $\bar{k}$ of $k$. Fix a maximal torus $T$ in $G_{\bar{k}}$ and a Borel subgroup $B$ of $G_{\bar{k}}$ that contains $T$. Let $X^\ast(T)$ (resp. $X_\ast(T)$) denote the group of cocharacters (resp. characters) of $G$. 
Let $\Phi:=\Phi(T,G_{\bar{k}})$ be the associated root system and $\Delta \subseteq \Phi(T,G_{\bar{k}})$ be a system of simple roots (i.e. a subset of $\Phi$ which form a basis for $Lie(G_{\bar{k}})$ such that any root $\beta \in \Phi$ can be represented as a sum $\beta =\sum_{\alpha \in \Delta}m_{\alpha} \alpha$, with $m_{\alpha}$ all non-negative or all non-positive integral coefficients). Let $W:=W(T,G_{\bar{k}})$ and $l:W\rightarrow \mathbb{Z}_+$ denote respectively the corresponding Weyl group and the usual length function on $W$.  For any subset $I \subseteq \Delta$ we set $\Phi_I$ to be the subset of $\Phi$ spanned by $I$. Furthermore for $u\in W$, $I_u$ will denote the set consisting of those elements of $\Delta$ that do not occur in the shortest expression $u$. We denote by $W_I$ the subgroup of the Weyl group $W$ generated by the reflections associated with the elements of $\Phi_I$. Let $W^I$ denotes a set of representative for $W/W_I$ with minimal length. Notice that any parabolic subgroup $P$ of $G$ is conjugate with a standard Parabolic subgroup, i.e. to a group of the form $P_I:=BW_IB$.  Finally we denote by $\mathcal{B}_G:=\mathcal{B}_G(B,T)$ the associated Bruhat-Tits building.\\

Let $Y$ be a variety with left $G$-action. To a $G$-bundle $\CG$ on $X$ one associates a fibration $\CG\times^G Y$ with fibre $Y$ over $X$, defined by the following quotient
$$
\CG\times Y\big{\slash}\sim,
$$
here $(x,y)\sim (xg,g^{-1}y)$ for every $g\in G$.

%
%

\section{Motive of cellular fibrations} \label{Motive of cellular fibrations}
\setcounter{equation}{0}

First we introduce the notion of motivic relatively cellular. Notice that this notion is slightly weaker than the geometric notion of relatively cellular introduced by Chernousov, Gille, Merkurjev \cite{CGM} and also Karpenco \cite{Kar}.

\begin{definition}\label{relativecellular1}
A scheme $X \in \CO b(Sch_k)$ is called \emph{motivic relatively cellular with respect to} the functor $M^c(-)$ if it admits a filtration by its closed subschemes:
$$
\emptyset= X_{-1} \subset X_0 \subset ... \subset X_n=X 
$$ 
together with flat equidimensional morphisms $p_i: U_i:=X_i \setminus  X_{i-1} \rightarrow Y_i$ of relative dimension $d_i$, such that the induced morphisms $p_i^\ast:M^c(Y_i)(d_i)[2d_i] \rightarrow M(U_i)$ are isomorphisms in $DM_{gm}^{eff}(k)$. Here $Y_i$ is smooth proper scheme for all $1 \leq i \leq n$. Moreover we say that $X$ is cellular if $p_i$ is affine bundle and $Y_i =\Spec k$ for $0 \leq i \leq n$. 
\end{definition}

\begin{proposition}\label{freechow}
Suppose $k$ admits resolution of singularities. Assume that $X\in \cO b(Sch_k)$ is equidimensional of dimension $n$, which admits a filtration as in the definition \ref{relativecellular1}. Then we have the following decomposition
$$
M^c(X)=\bigoplus_i M^c(Y_i)(d_i)[2d_i].
$$
\end{proposition}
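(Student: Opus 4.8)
The plan is to run an induction along the given filtration, using Voevodsky's localization triangle for motives with compact support; the only input that is not purely formal is a vanishing of connecting morphisms, which will follow from Poincar\'e duality together with the fact that motivic cohomology of a smooth scheme vanishes above the diagonal.

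First I would record the localization triangles. Since $k$ admits resolution of singularities, for each $0\le i\le n$ the closed immersion $X_{i-1}\hookrightarrow X_i$ with open complement $U_i=X_i\setminus X_{i-1}$ yields a distinguished triangle in $DM_{gm}^{eff}(k)$
$$
M^c(X_{i-1})\longrightarrow M^c(X_i)\longrightarrow M^c(U_i)\longrightarrow M^c(X_{i-1})[1];
$$
write $\partial_i$ for its connecting morphism. By Definition \ref{relativecellular1} the maps $p_i^\ast$ are isomorphisms, so $M^c(U_i)\cong M^c(Y_i)(d_i)[2d_i]$, and since $Y_i$ is smooth proper $M^c(Y_i)=M(Y_i)$, whence $M^c(U_i)\cong M(Y_i)(d_i)[2d_i]$. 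I then claim, by induction on $i$, that $M^c(X_i)\cong\bigoplus_{j=0}^{i}M(Y_j)(d_j)[2d_j]$. The case $i=0$ is immediate, since then $M^c(X_0)=M^c(U_0)$. For the inductive step it suffices to prove $\partial_i=0$: a distinguished triangle whose connecting map vanishes splits, giving $M^c(X_i)\cong M^c(X_{i-1})\oplus M^c(U_i)$, and the inductive hypothesis for $X_{i-1}$ then finishes the step.

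So the heart of the matter is $\partial_i=0$. Using the inductive description of $M^c(X_{i-1})$ and the identification of $M^c(U_i)$, the group $\Hom_{DM_{gm}^{eff}(k)}(M^c(U_i),M^c(X_{i-1})[1])$ in which $\partial_i$ lies is a finite direct sum of groups
$$
\Hom_{DM_{gm}^{eff}(k)}\bigl(M(Y_i)(d_i)[2d_i],\,M(Y_j)(d_j)[2d_j+1]\bigr),\qquad j<i .
$$
I would compute such a group after passing to $DM_{gm}(k)$ (legitimate by Voevodsky's cancellation theorem), where $M(Y_i)$ and $M(Y_j)$ are rigid with $M(Y)^\vee\cong M(Y)(-\dim Y)[-2\dim Y]$ for $Y$ smooth proper (one may assume $Y_i,Y_j$ connected). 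Unwinding the duality adjunctions identifies the group with a motivic cohomology group of the smooth proper scheme $Y_i\times Y_j$, of the form $H^{2q+1}_{\mathrm{mot}}\bigl(Y_i\times Y_j,\BZ(q)\bigr)$ with $q=d_j-d_i+\dim Y_j$. This vanishes: for $q<0$ because $\BZ(q)=0$ in that range, and for $q\ge 0$ because $H^{p}_{\mathrm{mot}}(Z,\BZ(q))=0$ for $p>2q$ on a smooth scheme $Z$ (equivalently, Bloch's higher Chow complex has no terms in negative simplicial degree) while $2q+1>2q$. Hence every summand is zero, $\partial_i=0$, the induction closes, and taking $i=n$ gives $M^c(X)\cong\bigoplus_{j}M(Y_j)(d_j)[2d_j]=\bigoplus_j M^c(Y_j)(d_j)[2d_j]$.

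The only genuine obstacle is the vanishing in the previous paragraph, and the delicate point there is the duality bookkeeping: one must land the $\Hom$-group in a bidegree controlled by the strong vanishing $H^{p}_{\mathrm{mot}}(-,\BZ(q))=0$ for $p>2q$, rather than by the weaker cohomological-dimension bound $H^{p}_{\mathrm{mot}}(Z,\BZ(q))=0$ for $p>q+\dim Z$, which would not be enough here. Everything else — existence of the localization triangles, splitting of a triangle with zero connecting map, and the induction — is formal. Note that the equidimensionality hypothesis on $X$ plays no explicit role in this argument beyond what is already encoded in Definition \ref{relativecellular1}.
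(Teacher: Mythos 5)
Your proof is correct, but it splits the localization triangle by a different mechanism than the paper does. You prove that the connecting morphism $\partial_i\colon M^c(U_i)\to M^c(X_{i-1})[1]$ vanishes, by feeding the inductive decomposition of $M^c(X_{i-1})$ into the Hom-group, applying Poincar\'e duality for the smooth proper $Y_j$ (legitimate under the resolution-of-singularities hypothesis, via cancellation), and invoking the vanishing $H^{p}_{\mathrm{mot}}(Z,\BZ(q))=0$ for $p>2q$ on smooth $Z$; your bookkeeping landing in $H^{2q+1}(Y_i\times Y_j,\BZ(q))$ with $q=d_j-d_i+\dim Y_j$ checks out, and a triangle with zero connecting map does split. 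The paper instead constructs an explicit section of $g_j\colon M^c(X_j)\to M^c(U_j)$: the closure of the graph of $p_j$ in $X_j\times Y_j$ is a cycle in $CH_{\dim X_j}(X_j\times Y_j)$, which by Voevodsky's identification of Hom-groups out of $M^c(Y_j)(d_j)[2d_j]$ with bivariant cycle groups yields a morphism $\gamma_j\colon M^c(Y_j)(d_j)[2d_j]\to M^c(X_j)$ satisfying $g_j\circ\gamma_j=p_j^\ast$. The paper's route buys a canonical splitting compatible with $p_j^\ast$ (hence better functoriality, and it does not need to know anything about $M^c(X_{j-1})$ to split the $j$-th triangle); your route is less constructive but uses only standard vanishing of motivic cohomology above the diagonal plus rigidity, and it makes transparent exactly where smoothness and properness of the $Y_j$ enter. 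Your closing observation that equidimensionality of $X$ is not used is also accurate for the paper's argument.
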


\begin{proof}
We prove by induction on $\dim X$. Consider the following distinguished triangle
$$
M^c(X_{j-1}) \rightarrow M^c(X_j) \xrightarrow{\;g_j} M^c(U_j) \rightarrow M^c(X_{j-1})[1].
$$
Take the closure of the graph of $p_j : U_j \rightarrow Y_j$ in $X_j \times Y_j$. This defines a cycle in $CH_{dim~X_j}(X_j\times Y_j)$ and since $Y_j$ is smooth this gives a morphism 
$$\gamma_j : M^c(Y_j)(d_j)[2d_j] \rightarrow M^c(X_j),$$
 by \cite[Chap. 5, Thm. 4.2.2.3) and Prop. 4.2.3]{VV}, such that $g_j \circ \gamma_j =p_j^\ast$. Thus the above distinguished triangle splits and hence we conclude by induction hypothesis.
\end{proof}

\begin{corollary}
Keep the notation and the assumptions of the above proposition.
Assume that each $Y_i$ belongs to $\CO b(Sm^{fr})$, then $X \in \CO b(Sch_k^{fr})$.
\end{corollary}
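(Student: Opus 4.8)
The plan is to combine the motivic decomposition furnished by Proposition~\ref{freechow} with the standard identification of Fulton's Chow groups as Borel--Moore motivic homology. Under the running hypotheses (resolution of singularities, or rational coefficients) one has, for every $Z \in \CO b(Sch_k)$ and every $i$, a natural isomorphism
$$
CH_i(Z) \;\cong\; \Hom_{DM_{gm}^{eff}(k)}\bigl(\BZ(i)[2i],\, M^c(Z)\bigr),
$$
which is exactly the description of $CH_\ast$ invoked (for $Z = X_j \times Y_j$) in the proof of Proposition~\ref{freechow}. Applying $\Hom_{DM_{gm}^{eff}(k)}(\BZ(i)[2i], -)$ to the decomposition $M^c(X) \cong \bigoplus_j M^c(Y_j)(d_j)[2d_j]$ of Proposition~\ref{freechow} then gives $CH_i(X) \cong \bigoplus_j \Hom_{DM_{gm}^{eff}(k)}\bigl(\BZ(i)[2i],\, M^c(Y_j)(d_j)[2d_j]\bigr)$.

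The next step is to untwist each summand. Since $Y_j$ is smooth and proper we have $M^c(Y_j) = M(Y_j)$, and the cancellation theorem makes the Tate twist fully faithful on $DM_{gm}^{eff}(k)$. Hence for $i \geq d_j$ the $j$-th summand equals $\Hom\bigl(\BZ(i-d_j)[2(i-d_j)], M^c(Y_j)\bigr) = CH_{i-d_j}(Y_j)$, and for $i < d_j$ it vanishes (most transparently after passing to $DM_{gm}(k)$, where $M(Y_j)$ is dualizable and Poincar\'e duality rewrites the group as $CH_{i-d_j}(Y_j) = 0$). Since $X$ has finite dimension, only finitely many $i$ contribute, and summing over $i$ yields an isomorphism of graded abelian groups $CH_\ast(X) \cong \bigoplus_j CH_\ast(Y_j)$. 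By hypothesis each $Y_j$ lies in $\CO b(Sm^{fr})$, so each $CH_\ast(Y_j)$ is free of finite rank over $\BZ$; a finite direct sum of such groups is again free of finite rank over $\BZ$, and therefore $CH_\ast(X)$ is free of finite rank over $\BZ$, i.e.\ $X \in \CO b(Sch_k^{fr})$.

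The main (and essentially only) subtlety is bookkeeping around effectivity: cancellation is cleanest inside $DM_{gm}^{eff}(k)$, while the duality argument that kills the summands with $i < d_j$ is most comfortable in $DM_{gm}(k)$, so one should record that $DM_{gm}^{eff}(k) \to DM_{gm}(k)$ is fully faithful (again by cancellation) and that both descriptions compute the same $CH_{i-d_j}(Y_j)$. This can be avoided entirely by working instead with the split localization triangles $M^c(X_{j-1}) \to M^c(X_j) \to M^c(U_j)$ already constructed in the proof of Proposition~\ref{freechow}: applying $\Hom(\BZ(i)[2i], -)$ turns them into split short exact sequences $0 \to CH_i(X_{j-1}) \to CH_i(X_j) \to CH_i(U_j) \to 0$, and since $CH_i(U_j) \cong CH_{i-d_j}(Y_j)$ via the flat pullback $p_j^\ast$, an induction on the length of the filtration reduces freeness of $CH_\ast(X)$ to freeness of each $CH_\ast(Y_j)$. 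In either form the argument is routine once Proposition~\ref{freechow} is in hand.
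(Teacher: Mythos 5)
Your proposal is correct and follows essentially the same route as the paper: the paper's proof consists precisely of applying $\Hom_{DM_{gm}^{eff}(k)}(\BZ(i)[2i],-)$ to the decomposition of Proposition~\ref{freechow} and invoking the identification $CH_i(Z)\cong\Hom(\BZ(i)[2i],M^c(Z))$ from \cite[Proposition 19.18]{MVW}, which is your first step. The extra bookkeeping you supply (untwisting via cancellation, or the alternative induction on the split localization triangles) just fills in details the paper leaves implicit.
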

\begin{proof}
After apply the functor $Hom(\BZ(i)[2i],-)$ to the decomposition $M^c(X)=\bigoplus_i M^c(Y_i)(d_i)[2d_i]$, we obtained in the above proposition, the corollary follows from \cite[Proposition 19.18]{MVW}.
\end{proof}
\begin{remark}\label{relativecellular2}
Note that one can define a variant of the definition \ref{relativecellular1} with respect to the functor $M(-)$. In this case one has to replace $p_i^\ast$ by $p_{i_{\ast}}$ and it is not necessary to assume that $p_i$'s are flat. With this definition it is not hard to see that a variant of the proposition \ref{freechow} holds after imposing some additional condition. Indeed to apply Gysin triangle we have to assume that all $X_i$s that appear in the filtration of $X$ are smooth. Note that in this case we don't need to assume $k$ admits resolution of singularities. The proof goes similar to the proof of proposition \ref{freechow}.

\end{remark}

\begin{remark}\label{puretate}
Assume that $X$ is a motivic relatively cellular scheme, such that $Y_i$ is pure Tate for every $1 \leq i \leq n$. Then using noetherian induction and gysin triangle one can show that $X$ is pure Tate. 
\end{remark}
Let $Ab$ be the category of abelian groups. Let us recall that there is a fully faithful tensor triangulated functor $i: D_f^b(Ab) \rightarrow DM_{gm}^{eff}(k)$, where  $D_f^b(Ab)$ is the full subcategory of the bounded drived category $D^b(Ab)$, consisting objects with finitely generated cohomology groups, see \cite[Prop. 4.5.]{H-K}.
\newline

\begin{proposition}\label{MotivewcsofCellularVariety}
Assume that $\charakt k=0$. For a cellular variety $X \in \cO b(Sch_k)$, there is a canonical isomorphism $$
\coprod_{p\geq0} CH_p(X) \otimes \mathbb{Z}(p)[2p] \rightarrow M^c(X),
$$ 

which is functorial both with respect to proper or flat equidimentional morphisms.
\end{proposition}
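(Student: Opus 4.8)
The plan is to extract the map from Voevodsky's identification of motivic Borel--Moore homology with Chow groups, and then to verify that it is an isomorphism by induction along a cellular filtration, re-using the splitting mechanism already exploited in the proof of Proposition~\ref{freechow}. Since $\charakt k=0$, the field $k$ admits resolution of singularities, so for every $X\in\cO b(Sch_k)$ there is a canonical isomorphism $\Hom_{DM_{gm}^{eff}(k)}(\BZ(p)[2p],M^c(X))\cong CH_p(X)$, natural for proper pushforward and for flat equidimensional pullback (see \cite[Ch.~5]{VV} and \cite[Prop.~19.18]{MVW}). When $X$ is cellular, $CH_p(X)$ is free of finite rank, a basis being given by the classes of the closures of the $p$-dimensional cells (cf.\ \cite{CGM,Kar}); hence $CH_p(X)\otimes\BZ(p)[2p]$ is literally a finite direct sum of copies of $\BZ(p)[2p]$, and $\Hom\bigl(CH_p(X)\otimes\BZ(p)[2p],\,M^c(X)\bigr)\cong\Hom_{\BZ}\bigl(CH_p(X),CH_p(X)\bigr)$. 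I would then take $\Phi_p$ to be the morphism corresponding to $\id_{CH_p(X)}$ and set $\Phi:=\bigoplus_{p}\Phi_p$ (a finite sum, since $\dim X<\infty$). Being built directly out of the Borel--Moore identification, $\Phi$ does not depend on the chosen cellular structure, so it is canonical, and it inherits the asserted functoriality with respect to proper and flat equidimensional morphisms.

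To show that $\Phi$ is an isomorphism I would induct on the length $n$ of a cellular filtration $\emptyset=X_{-1}\subset X_0\subset\cdots\subset X_n=X$, with $p_j\colon U_j:=X_j\setminus X_{j-1}\to\Spec k$ an affine bundle, necessarily $U_j\cong\BA^{d_j}_k$. By homotopy invariance $M^c(U_j)\cong M^c(\Spec k)(d_j)[2d_j]=\BZ(d_j)[2d_j]$ (for instance from $\BA^{d_j}=\BP^{d_j}\setminus\BP^{d_j-1}$ and the projective bundle formula for the smooth proper $\BP^{m}$). Exactly as in the proof of Proposition~\ref{freechow}, the closure $\ol U_j\subseteq X_j$ of the $j$-th cell defines a cycle in $CH_{d_j}(X_j)$ and hence a morphism $\gamma_j\colon\BZ(d_j)[2d_j]\to M^c(X_j)$ splitting the localization triangle
$$
M^c(X_{j-1})\longrightarrow M^c(X_j)\longrightarrow M^c(U_j)\longrightarrow M^c(X_{j-1})[1];
$$
in particular its connecting map vanishes. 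The same triangle yields split short exact sequences $0\to CH_p(X_{j-1})\to CH_p(X_j)\to CH_p(U_j)\to 0$ on Chow groups, i.e.\ a split distinguished triangle after tensoring with the $\BZ(p)[2p]$. Because the identification $\Hom(\BZ(p)[2p],M^c(-))\cong CH_p(-)$ is natural for the closed pushforward $X_{j-1}\hookrightarrow X_j$ and for the flat restriction to $U_j$, the maps $\Phi_{X_{j-1}}$, $\Phi_{X_j}$, $\Phi_{U_j}$ assemble into a morphism of distinguished triangles; since $\Phi$ is tautologically an isomorphism for $U_j\cong\BA^{d_j}_k$ and, by the inductive hypothesis, for $X_{j-1}$, the triangulated five lemma gives it for $X_j$. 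The base case $X_0\cong\BA^{d_0}_k$ is the same computation, and regrouping the resulting summands $\BZ(d_j)[2d_j]$ by the common value $p=d_j$ recovers $M^c(X)\cong\bigoplus_{p}\BZ(p)[2p]^{\,\rk CH_p(X)}$.

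The geometric substance of the argument --- splitting each localization triangle through the closure of a cell --- is already contained in Proposition~\ref{freechow}, so I expect the only genuine work to be the bookkeeping: checking that the intrinsically defined $\Phi$ (which comes from Borel--Moore homology, not from the chosen filtration) is compatible with these splittings, i.e.\ that the relevant squares in the morphism of triangles above commute. This reduces to the naturality of $\Hom(\BZ(p)[2p],M^c(-))\cong CH_\ast(-)$ under proper pushforward and flat pullback, together with the elementary fact that the closure in $X_j$ of a cell of $X_{j-1}$ is again the closure of that cell. Once these are granted, the induction runs automatically and simultaneously produces the claimed functoriality.
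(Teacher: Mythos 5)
Your argument is correct, but note that the paper does not actually prove this proposition: its ``proof'' is the single line ``C.f.\ \cite[Prop.~3.4]{BKahn}'', so you have in effect reconstructed the proof of the cited result rather than an argument appearing in the paper. What you do --- define $\Phi_p$ as the morphism corresponding to $\id_{CH_p(X)}$ under the identification $\Hom(\BZ(p)[2p],M^c(X))\cong CH_p(X)$ (legitimate because $CH_p(X)$ is free of finite rank for cellular $X$), then prove it is an isomorphism by induction along the cell filtration, splitting each localization triangle by the class of the cell closure exactly as in Proposition~\ref{freechow} and invoking the triangulated five lemma --- is essentially Kahn's own argument, and it is the natural way to obtain both canonicity and the stated functoriality. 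The two points that genuinely need care are ones you already address, so I only flag them for emphasis: (i) the injectivity of $CH_p(X_{j-1})\to CH_p(X_j)$, which is not part of the raw localization sequence but follows from the motivic splitting of Proposition~\ref{freechow} after applying $\Hom(\BZ(p)[2p],-)$; and (ii) to apply the five lemma you need a genuine morphism of distinguished triangles, which here is unproblematic only because both connecting maps vanish (both triangles split), so the square involving them commutes for free. With those two observations made explicit, your proof is complete and self-contained, which is arguably an improvement on the paper's bare citation.
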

\begin{proof}
C.f. \cite[Prop. 3.4.]{BKahn}.
\end{proof}

\begin{corollary}\label{Bruno Kahn result}

Let $X$ be as above. Assume further that it is equidimentional and smooth. Then there is a natural isomorphism in $DM_{gm}^{eff}(k)$:
$$
\coprod_{p \geqslant 0}CH^p(X)^{\vee} \otimes \mathbb{Z}(p)[2p] \rightarrow M(X),
$$
 where $CH^p(X)^{\vee}$ denotes the dual $\mathbb{Z}$-module.
\end{corollary}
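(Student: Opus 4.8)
The plan is to obtain the formula by dualizing the description of $M^c(X)$ provided by Proposition~\ref{MotivewcsofCellularVariety} and feeding it into Poincar\'e duality. First I would observe that, $X$ being smooth, equidimensional of dimension $n$ and cellular (and $\charakt k=0$, so resolution of singularities is available), Proposition~\ref{MotivewcsofCellularVariety} already gives a canonical isomorphism $M^c(X)\cong\coprod_{p\geq 0}CH_p(X)\otimes\BZ(p)[2p]$, a \emph{finite} direct sum of effective Tate twists; moreover, by the corollary following Proposition~\ref{freechow} applied with all $Y_i=\Spec k$, each group $CH_p(X)$ is free of finite rank over $\BZ$ and $CH_p(X)=0$ for $p\notin\{0,\dots,n\}$. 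In particular $M^c(X)$ is strongly dualizable in $DM_{gm}(k)$.

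Next I would invoke Poincar\'e duality in $DM_{gm}(k)$ (\cite[Chap.~5]{VV}): for $X$ smooth and equidimensional of dimension $n$ there is a canonical isomorphism $M(X)\cong M^c(X)^{\vee}(n)[2n]$, where $(-)^{\vee}$ denotes the internal dual. Dualizing the decomposition above — using that the functor $i\colon D^b_f(Ab)\to DM_{gm}^{eff}(k)$ is tensor triangulated (so it takes $\Hom_{\BZ}(A,\BZ)$ to the dual of $i(A)$ when $A$ is free of finite rank), that a finite direct sum has dual the direct sum of the duals, and that $\BZ(1)$ is invertible in $DM_{gm}(k)$ with inverse $\BZ(-1)$ — one gets
$$
M^c(X)^{\vee}\ \cong\ \coprod_{p\geq 0}CH_p(X)^{\vee}\otimes\BZ(-p)[-2p].
$$
Twisting by $(n)[2n]$ yields $M(X)\cong\coprod_{p\geq 0}CH_p(X)^{\vee}\otimes\BZ(n-p)[2(n-p)]$, and reindexing by $q=n-p$ together with the identification $CH_p(X)=CH^{n-p}(X)$ (valid since $X$ is equidimensional of dimension $n$) turns this into
$$
M(X)\ \cong\ \coprod_{q\geq 0}CH^q(X)^{\vee}\otimes\BZ(q)[2q];
$$
since $CH^q(X)=0$ unless $0\le q\le n$, every Tate twist occurring is effective, so this isomorphism already lives in $DM_{gm}^{eff}(k)$, as required. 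Naturality is inherited from the functoriality asserted in Proposition~\ref{MotivewcsofCellularVariety} (with respect to proper and flat equidimensional morphisms) together with the naturality of Poincar\'e duality.

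The argument is short and is essentially bookkeeping once the inputs are in place; the one step that deserves care — and the place where I would be most careful — is the passage between the effective category $DM_{gm}^{eff}(k)$ and $DM_{gm}(k)$: forming the internal dual a priori requires inverting the Tate motive, and one must check that this operation is compatible with the embedding $i$ from $D^b_f(Ab)$ and that, after the final twist by $(n)[2n]$, everything descends back to $DM_{gm}^{eff}(k)$.
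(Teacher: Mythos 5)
Your argument is correct and is exactly the standard deduction: the paper itself only cites \cite[Cor.~3.5]{BKahn}, and Kahn's corollary is obtained from his Prop.~3.4 (the paper's Proposition~\ref{MotivewcsofCellularVariety}) precisely by the Poincar\'e-duality dualization you carry out, using freeness and finiteness of the Chow groups of a cellular variety and the cancellation theorem to descend back to $DM_{gm}^{eff}(k)$. So you have reconstructed the intended proof rather than found a different one.
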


\begin{proof} 
C.f. \cite[Cor. 3.5]{BKahn}.
\end{proof}

The most famous examples of cellular varieties are in fact generalized flag varieties. Let us state the following easy consequence of the above proposition applied to these particular examples.
 
\begin{corollary}\label{motive-general-flag-var}
(motive of a generalized flag variety)Let $G$ be a split reductive group, and let $P$ be a parabolic subgroup of $G$ which is conjugate with a standard parabolic subgroup $P_I$. Then there is an isomorphism 
$$
M(G/P)\cong \coprod_{w\in W^I} \mathbb{Z}(l(\omega))[2l(w)],
$$ 
in particular $G/P$ is pure Tate.
\end{corollary}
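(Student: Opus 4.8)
The plan is to apply Corollary \ref{Bruno Kahn result} to $X = G/P$, so the whole task reduces to identifying the Chow groups of a generalized flag variety. First I would recall the Bruhat decomposition: for a split reductive group $G$ with Borel $B$ and standard parabolic $P = P_I$, the quotient $G/P$ decomposes as a disjoint union of Schubert cells $BwP/P$ indexed by the minimal-length coset representatives $w \in W^I$, and the cell indexed by $w$ is isomorphic to the affine space $\mathbb{A}^{l(w)}$ (this is standard; the length of the minimal representative equals the dimension of the cell). Hence $G/P$ is a cellular variety in the sense of Definition \ref{relativecellular1}, with all $Y_i = \Spec k$ and all $p_i$ affine bundles; in particular it is smooth, proper, and equidimensional of dimension $\dim G/P = \max_{w\in W^I} l(w)$.

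Next I would read off the Chow groups from the cell structure. Since the closed strata are nested unions of Schubert varieties and the successive differences are affine spaces, the localization sequence for Chow groups shows $CH_\ast(G/P)$ is free on the classes of the Schubert cell closures: $CH^p(G/P) \cong \mathbb{Z}^{\#\{w\in W^I : l(w) = p\}}$, with the class of the codimension-$p$ Schubert variety $\overline{BwP/P}$ (for $l(w) = \dim G/P - p$, equivalently grading by $l(w)$ directly for $CH_\ast$) forming a basis. Dualizing, $CH^p(G/P)^\vee$ is again free of the same rank. Plugging this into the isomorphism of Corollary \ref{Bruno Kahn result},
\[
M(G/P) \;\cong\; \coprod_{p\geq 0} CH^p(G/P)^\vee \otimes \mathbb{Z}(p)[2p] \;\cong\; \coprod_{w\in W^I} \mathbb{Z}(l(w))[2l(w)],
\]
where in the last step I have simply re-indexed the basis of $\bigoplus_p CH^p(G/P)^\vee$ by the set $W^I$, matching the summand $\mathbb{Z}(p)[2p]$ to each $w$ with $l(w) = p$. (Here one should note the conventions: Corollary \ref{Bruno Kahn result} is stated for $\charakt k = 0$, or under the running CAUTION hypothesis on resolution of singularities / $\mathbb{Q}$-coefficients; over a general perfect field one instead invokes the positive-characteristic version of B.\ Kahn's computation referenced via \cite{H-K}, which applies verbatim to cellular varieties.) Since every summand is a Tate twist of $\mathbb{Z}$, the motive $M(G/P)$ is pure Tate, and as $G$ is already split this holds over $k$ itself — no base change to $\bar k$ is needed.

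There is essentially no serious obstacle here: this is a direct corollary of the machinery already assembled. The only point requiring a little care is bookkeeping — making sure the grading in Kahn's statement ($CH^p$ twisted by $\mathbb{Z}(p)[2p]$) matches the indexing by length on $W^I$, i.e.\ that the codimension-$p$ part of $CH^\ast(G/P)$ has rank equal to $\#\{w \in W^I : l(w) = p\}$, which follows because the Schubert cell $BwP/P \cong \mathbb{A}^{l(w)}$ contributes a generator to $CH_{l(w)}(G/P) = CH^{\dim G/P - l(w)}(G/P)$ — so one should be consistent about whether the summands are indexed by dimension or codimension of cells; the final formula $\coprod_{w\in W^I}\mathbb{Z}(l(w))[2l(w)]$ uses $l(w)$ as written, which is the correct reading once one keeps track that $CH^p(X)^\vee$ and $CH_p(X)$ have the same rank. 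The purity claim is then immediate.
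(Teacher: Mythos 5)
Your proof is correct and follows essentially the same route as the paper: Bruhat decomposition gives the cell structure, and then one of B.~Kahn's decompositions yields the formula — the paper invokes Proposition \ref{MotivewcsofCellularVariety} (the $M^c$/homological version, using $CH_p$ and properness of $G/P$ to identify $M^c$ with $M$) whereas you invoke its Corollary \ref{Bruno Kahn result} (the $M$/cohomological version with $CH^p{}^\vee$), which is an immaterial difference. Your bookkeeping remark conflates the rank of the codimension-$p$ part (which is $\#\{w: l(w)=\dim G/P - p\}$) with $\#\{w: l(w)=p\}$; these agree only because the length multiset on $W^I$ is palindromic under the longest-element involution, so the final formula is still correct, but the paper's use of $CH_p$ avoids this detour entirely.
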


\begin{proof}
The decomposition $G=\coprod_{w\in W^I} BwP$ induces a cell decomposition $G/P\cong G/P_I=\coprod_{w\in W^I} X_w$, where $X_w\cong\mathbb{A}^{l(w)}$. Then $CH_\ast(G/P)$ is generated by the cycles $[\overline{X_w}]$ and thus we may conclude by the properness of $G/P$ and proposition \ref{MotivewcsofCellularVariety}.  
\end{proof}

In the rest of this section we are going to compute motives of certain fiber bundles. Recall that the naive version of \emph{Leray-Hirsch theorem} does not hold for the chow functor. One way to tackle the problem in the algebraic set-up is to impose some stronger conditions on the fiber. For instance one has to assume that the fiber admits cell decomposition and satisfies Poincar\'e duality (i.e. the intersection pairings $CH_p(F)\otimes CH^p(F)\rightarrow CH_0(F)$ are perfect parings, note that this is automatic when $\charakt k=0$). Let $f: \Gamma \rightarrow X$ be a smooth proper morphism that is locally trivial for the Zariski topology, with fiber $F$ which satisfies the above conditions. Let also $\zeta_1,...,\zeta_m$ be homogeneous elements of $CH^\ast(X)$ whose restriction to any fiber form a basis of its chow group over $\mathbb{Z}$. Then the Leray-Hirsch theorem for chow groups says that the homomorphism
$$
\varphi : \oplus_{i=1}^m CH_\ast(X) \rightarrow CH_\ast({\Gamma})~~~~~~,~\varphi(\oplus \alpha_i)= \Sigma \zeta_i \cap f^\ast\alpha_i
$$
 is an isomorphism. When $X$ is non-singular, it means that $\zeta_i$ form a free basis for $CH^\ast(\Gamma)$ as a $CH^\ast(X)-module$. For the proof we refer to \cite[appendix C]{Col-Ful}
\newline
Let us now state the \emph{motivic version of the Leray-Hirsch theorem}.
 
\begin{theorem}\label{Leray Hirsch for Voevodsky motives}
Let $X$ be a smooth irreducible variety over a field $k$ of characteristic $0$. Let $\pi:\Gamma \rightarrow X$ be a proper smooth locally trivial (for Zariski topology) fibration with fiber $F$.  Furthermore assume that $F$ is cellular. Then one has an isomorphism in $DM_{gm}^{eff}(k)$
$$
M(\Gamma)\cong \coprod_{p \geqslant 0}CH_p(F) \otimes M(X)(p)[2p].
$$  
\end{theorem}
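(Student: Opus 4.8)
The plan is to reduce the statement to the decomposition of $M(\Gamma)$ as a motive over the base $X$, using the fact that, Zariski-locally on $X$, the fibration $\Gamma \to X$ is a product and hence its motive is computed by Corollary \ref{Bruno Kahn result} (applied to the cellular fiber $F$), and then to glue these local computations by a Mayer--Vietoris / noetherian induction argument. First I would observe that since $\charakt k = 0$ and $F$ is cellular, Poincar\'e duality holds on $F$, so $CH_p(F) \cong CH^{d-p}(F)^{\vee}$ (with $d = \dim F$) and $CH_\ast(F)$ is free of finite rank; fix homogeneous cycles $\zeta_1, \dots, \zeta_m \in CH^\ast(\Gamma)$ restricting to a basis of $CH^\ast(F)$ on each fiber — these exist at least after shrinking $X$, and on a cellular fiber one can even take them to be global since the restriction map $CH^\ast(\Gamma) \to CH^\ast(F)$ is surjective. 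These cycles define correspondences $M(\Gamma) \to \BZ(p)[2p]$, and dually, via the projection $\pi$, one builds a candidate morphism
$$
\Phi : \coprod_{p \geqslant 0} CH_p(F) \otimes M(X)(p)[2p] \longrightarrow M(\Gamma),
$$
where the summand indexed by a basis element $\zeta_i \in CH^{p_i}(F)^{\vee} \cong CH_{d-p_i}(F)$ is the composite of $\pi^\ast : M(X)(p_i)[2p_i] \to M(\Gamma)(p_i)[2p_i]$ with the relative cycle class of $\zeta_i$ (i.e. cup product with a lift of $\zeta_i$). The heart of the proof is to show $\Phi$ is an isomorphism.

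The main step: I would argue by induction on the number of opens in a trivializing cover of $X$. If $X = U$ is itself trivializing, then $\Gamma|_U \cong U \times F$, and $M(U \times F) \cong M(U) \otimes M(F) \cong \coprod_p CH_p(F) \otimes M(U)(p)[2p]$ by the K\"unneth formula in $DM_{gm}^{eff}(k)$ together with Corollary \ref{Bruno Kahn result} for $M(F)$; one checks the chosen $\zeta_i$ realize this isomorphism as $\Phi$. For the inductive step, write $X = U \cup V$ with $\Gamma|_U$, $\Gamma|_V$ already understood, and $U \cap V$ understood by induction on dimension (or on the cover). There is a Mayer--Vietoris distinguished triangle
$$
M(\Gamma|_{U \cap V}) \to M(\Gamma|_U) \oplus M(\Gamma|_V) \to M(\Gamma) \to M(\Gamma|_{U \cap V})[1],
$$
and similarly one for $X = U \cup V$ tensored with $\coprod_p CH_p(F)(p)[2p]$; the morphism $\Phi$ is compatible with these triangles (here it is essential that the $\zeta_i$ are chosen \emph{globally} on $\Gamma$, so that their restrictions to $\Gamma|_U, \Gamma|_V, \Gamma|_{U\cap V}$ are compatible), so the five lemma in the triangulated category $DM_{gm}^{eff}(k)$ forces $\Phi$ to be an isomorphism over $X$ once it is over $U$, $V$ and $U \cap V$.

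The anticipated obstacle is twofold. First, the existence of \emph{global} classes $\zeta_i \in CH^\ast(\Gamma)$ restricting fiberwise to a basis: for a Zariski-locally trivial fibration with cellular fiber this should follow from the fact that the cells of $F$ propagate to a relative cellular structure on $\Gamma$ over $X$ (one stratifies $\Gamma$ by the preimages of a cell filtration of $F$, which are affine-bundle-like over $X$), so that $\Gamma$ is motivic relatively cellular in the sense of Definition \ref{relativecellular1} with $Y_i$'s being (disjoint unions of) copies of $X$; Proposition \ref{freechow} then gives exactly the asserted decomposition and simultaneously produces the classes. In fact this is probably the cleanest route: realize $\Gamma \to X$ as a relative cellular variety and invoke Proposition \ref{freechow} directly, rather than bootstrapping from a cover. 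Second, one must be careful that the morphism $\gamma_j$ of Proposition \ref{freechow} splitting each Gysin/localization triangle can be arranged to be the relative cycle class of the propagated cell, so that the resulting isomorphism is $\pi^\ast$-linear and matches $\Phi$; this is a compatibility check of cycle-theoretic operations (projection formula, functoriality of $M^c$ under flat pullback and proper pushforward from \cite{VV}) rather than a genuinely hard point, but it is where the argument needs to be written carefully. I expect no serious difficulty from characteristic $0$ hypotheses beyond what is already packaged into Corollary \ref{Bruno Kahn result} and the validity of resolution of singularities.
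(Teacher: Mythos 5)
Your main argument --- choose global homogeneous classes $\zeta_{i,p}\in CH^\ast(\Gamma)$ restricting to a fiberwise basis, assemble them together with $\pi$ and the diagonal into a comparison morphism between $M(\Gamma)$ and $\coprod_{p}CH_p(F)\otimes M(X)(p)[2p]$, and verify that it is an isomorphism over a trivializing cover by Mayer--Vietoris, the K\"unneth formula and Corollary \ref{Bruno Kahn result} --- is exactly the proof given in the paper (up to reversing the direction of the arrow via Poincar\'e duality). The one point you single out as an obstacle, namely the existence of global $\zeta_{i,p}$ whose restrictions form a basis on every fibre, is likewise taken for granted in the paper's proof, which only remarks that irreducibility of $X$ reduces the check to a single fibre; so your proposal is faithful to the paper's argument, including in what it leaves unjustified.
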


\begin{proof}
Take a set of homogeneous elements $\{\zeta_{i,p}\}_{i,p}$ of $CH^\ast(\Gamma)$ such that for any $p$ the restrictions of $\{\zeta_{i,p}\}_i$ to any fiber $\Gamma_x\cong F$ form a basis for $CH^p(\Gamma_x)$. Notice that since $X$ is irreducible, it is enough that the restrictions of the $\zeta_i$'s generate $CH_\ast
(\Gamma_x)$ for the fiber over a particular $x$. 

By the theorems 14.16 and 19.1 of \cite{MVW}, for each $i$, $\zeta_{i,p}$ defines a morphism $M(\Gamma) \rightarrow \mathbb{Z}(p)[2p]$. Summing up all these morphisms and taking dual, by Poincar\'e duality we get the following morphism  
$$
\varphi: M(\Gamma)\rightarrow\bigoplus_p CH_p(F)\otimes \mathbb{Z}(p)[2p].
$$

Composing $M(\Delta): M(\Gamma)\rightarrow M(\Gamma\times \Gamma)\cong M(\Gamma) \otimes M(\Gamma)$ that is induced by the diagonal map $\Delta: \Gamma\times \Gamma \rightarrow \Gamma$, with $M(\pi)\otimes \varphi$ we obtain a morphism 
$$
M(\Gamma)\rightarrow \bigoplus_p CH^p(F)\otimes M(X)(p)[2p].$$

Now take a covering $\{U_i\}$ of $X$ that trivializes $\Gamma$. The theorem then follows from Mayer-Vietoris triangle, Kunneth formula \cite[Prop.4.1.7]{VV}, corollary \ref{Bruno Kahn result} and \cite[theorem 4.3.7, 3)]{VV}.  

\end{proof}

%
%

\section{Geometric Motives of G-bundles} \label{SectMotiveofG-bundles}

In this section we study motives of $G$-bundles over geometrically cellular or even geometrically mixed Tate variety $X$. Let us first recall the following result of A. Huber and B. Kahn.

\begin{proposition}\label{geometricmixedtate}

An object $M \in DM_{gm}^{eff}(k)$ is geometrically mixed Tate if and only if there is a finite
separable extension $E$ of $k$ such that the restriction of $M$ to $DM_{gm}^{eff}(E)$
is mixed Tate.

\end{proposition}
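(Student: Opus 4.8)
The plan is to prove the two implications separately; only one of them carries content. The reverse direction is formal: if $M_E$ is mixed Tate for some finite separable $E/k$, choose a $k$-embedding $E\hookrightarrow\overline{k}$. The associated base change functor $DM_{gm}^{eff}(E)\to DM_{gm}^{eff}(\overline{k})$ is triangulated, symmetric monoidal, and preserves direct summands, and it sends $\mathbb{Z}(0),\mathbb{Z}(1)$ to $\mathbb{Z}(0),\mathbb{Z}(1)$; hence it carries the thick subcategory $TDM_{gm}^{eff}(E)$ into $TDM_{gm}^{eff}(\overline{k})$. Since $(M_E)_{\overline{k}}\cong M_{\overline{k}}$, the motive $M$ is geometrically mixed Tate.

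For the forward direction I would invoke the continuity of Voevodsky's motivic categories along the filtered system of finite separable subextensions of $\overline{k}/k$. Because $k$ is perfect, $\overline{k}=k^{\mathrm{sep}}=\varinjlim_{E}E$ over such $E$, and (under our standing hypothesis of resolution of singularities, or after $\otimes\mathbb{Q}$) for all geometric --- hence compact --- objects $A,B\in DM_{gm}^{eff}(k)$ one has a canonical isomorphism
$$
\Hom_{DM_{gm}^{eff}(\overline{k})}\big(A_{\overline{k}},B_{\overline{k}}\big)\;\cong\;\varinjlim_{E}\Hom_{DM_{gm}^{eff}(E)}\big(A_E,B_E\big),
$$
the colimit taken over finite separable $E/k$; this is the continuity property of $DM$ over fields, see \cite{CD} and \cite{H-K}. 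Two formal consequences I will use repeatedly: a morphism over $\overline{k}$ between base changes of compact $k$-objects is already defined over a finite separable level, and a compact object over such a level whose base change to $\overline{k}$ vanishes is itself zero over a (possibly larger) finite separable level, since its identity endomorphism maps to $0$ in the colimit of endomorphism groups.

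Granting this, one descends a finite witness of mixed Tateness. Membership of $M_{\overline{k}}$ in $TDM_{gm}^{eff}(\overline{k})$ means it is built from $\mathbb{Z}(0)$ and $\mathbb{Z}(1)$ by finitely many shifts, cones, and retracts, and such a construction is encoded by finitely many objects $\mathbb{Z}(n_i)[m_i]$ together with a finite list of morphisms (the maps whose cones are formed, and the idempotents that split off the retracts). Working upward through the construction and, at each stage, using the displayed $\Hom$-colimit to descend the finitely many morphisms appearing there to a finite separable extension (then enlarging), one arrives at a single finite separable $E/k$ over which the whole construction lives, i.e. an object $M'\in TDM_{gm}^{eff}(E)$ with $M'_{\overline{k}}\cong M_{\overline{k}}$. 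It remains to identify $M'$ with the base change of the given $M$: the isomorphism $M'_{\overline{k}}\cong M_{\overline{k}}\cong(M_E)_{\overline{k}}$ descends to a morphism over some finite separable $E'\supseteq E$, whose cone is a compact object becoming $0$ over $\overline{k}$; by the second consequence above this cone vanishes over a finite separable $E''\supseteq E'$, so $M_{E''}\cong M'_{E''}$ lies in $TDM_{gm}^{eff}(E'')$. Renaming $E''$ as $E$ finishes the argument.

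The main obstacle is the continuity input itself --- the identification of $DM_{gm}^{eff}(\overline{k})$ with the $2$-colimit of the categories $DM_{gm}^{eff}(E)$, or at least the $\Hom$-group form used above --- together with the elementary but essential observation that membership in a thick subcategory is witnessed by only finitely many morphisms, so that one finite extension absorbs them all. Everything else reduces to routine manipulations with distinguished triangles, idempotents, and colimits of $\Hom$-groups.
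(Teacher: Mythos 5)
Your argument is correct, but note that the paper does not actually prove this proposition: its ``proof'' is a bare citation of \cite[Prop.~5.3]{H-K}, and what you have written is essentially a reconstruction of the argument behind that citation --- continuity of $DM_{gm}^{eff}$ along the filtered system of finite separable subextensions of $\overline{k}/k$ (so that morphisms between compact objects, vanishing of compact objects, and idempotents all descend to a finite level, using $k$ perfect so that $\overline{k}=k^{\mathrm{sep}}$), combined with the observation that membership in the thick subcategory generated by $\mathbb{Z}(0)$ and $\mathbb{Z}(1)$ is witnessed by finitely many objects and morphisms. The one step worth spelling out is the descent of retracts: after descending an idempotent $e$ of $N_{\overline{k}}$ to some $e_E$, you must enlarge $E$ so that $e_E^2=e_E$ holds on the nose (the difference dies in the filtered colimit of endomorphism groups), and then use that $DM_{gm}^{eff}(E)$ is pseudo-abelian by construction to split it; with that made explicit, the rest is as routine as you claim.
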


\begin{proof}
c.f. \cite[Proposition 5.3]{H-K}.
\end{proof}

\begin{definition} \label{DefMixedArrangement}
Let $X \in \cO b(Sch_k)$. We say that $X$ is \emph{mixed Tate} if the associated motive $M^c(X)$ is an object of the subcategory of mixed Tate motives $TDM_{gm}^{eff}(k)$. Let $\{ X_i\}_{i=1}^n$ be the set of irreducible components of  $X$. We call $X$ a \textit{configuration of mixed Tate varieties} if
\begin{enumerate}
\item[i)] $X_i$ is mixed Tate for $1 \leq i \leq n$, and
\item[ii)] Union of the elements of any arbitrary subset of $\{X_{ij} := X_i \cap X_j\}_{i \neq j}$ is a configuration of mixed Tate varieties or is empty.
\end{enumerate} 
\end{definition}

\begin{lemma}\label{LemMixedArrangement}
The motive of every configuration of mixed Tate varieties is mixed Tate.
\end{lemma}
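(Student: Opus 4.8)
The plan is to argue by induction on the pair $(\dim X, n)$ ordered lexicographically, where $n$ is the number of irreducible components of $X$; we may assume $X$ reduced, since $M^c$ is insensitive to nilpotents. The engine of the proof is the Mayer--Vietoris distinguished triangle for motives with compact support attached to a closed cover $X = A\cup B$,
$$M^c(A\cap B)\longrightarrow M^c(A)\oplus M^c(B)\longrightarrow M^c(X)\longrightarrow M^c(A\cap B)[1],$$
which follows in the standard way from the localization triangles for $M^c$ (available under the running hypothesis that $k$ admits resolution of singularities or that coefficients lie in $\mathbb{Q}$). Since $TDM_{gm}^{eff}(k)$ is thick, whenever two of the three vertices of such a triangle are mixed Tate so is the third; this is the only closure property we use.

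For the base cases: $M^c(\emptyset)=0$ is mixed Tate, and if $X$ is irreducible ($n=1$) then $X=X_1$ is mixed Tate by condition (i) of Definition \ref{DefMixedArrangement}. For the inductive step assume $n\ge2$ and set $A:=X_1\cup\dots\cup X_{n-1}$ and $B:=X_n$, both closed in $X$. I would first observe that $A$ is again a configuration of mixed Tate varieties with exactly $n-1$ irreducible components: since no $X_i$ is contained in another the components of $A$ are precisely $X_1,\dots,X_{n-1}$, condition (i) for $A$ is inherited from that for $X$, and condition (ii) for $A$ is the special case of condition (ii) for $X$ corresponding to the sub-collection $\{X_i\cap X_j\mid i\ne j,\ i,j\le n-1\}$ of $\{X_{ij}\}_{i\ne j}$. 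As $(\dim A, n-1)<(\dim X, n)$ lexicographically, the induction hypothesis gives that $M^c(A)$ is mixed Tate, while $M^c(B)=M^c(X_n)$ is mixed Tate by (i).

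It then remains to deal with $A\cap B=\bigcup_{i=1}^{n-1}(X_i\cap X_n)$, which is the union of the sub-collection $\{X_{1n},\dots,X_{(n-1)n}\}$ of $\{X_{ij}\}_{i\ne j}$ and hence, by condition (ii), is either empty or itself a configuration of mixed Tate varieties. The key point is a strict dimension drop: since $X$ is reduced with distinct irreducible components, each $X_i\cap X_n$ is a \emph{proper} closed subset of the irreducible variety $X_i$, so $\dim(X_i\cap X_n)<\dim X_i\le\dim X$ and therefore $\dim(A\cap B)<\dim X$. If $A\cap B=\emptyset$ then $M^c(X)\cong M^c(A)\oplus M^c(B)$ is mixed Tate; otherwise $(\dim(A\cap B),\ast)<(\dim X,n)$ lexicographically for any number of components of $A\cap B$, so the induction hypothesis applies and $M^c(A\cap B)$ is mixed Tate. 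In both cases the Mayer--Vietoris triangle and thickness of $TDM_{gm}^{eff}(k)$ force $M^c(X)$ to be mixed Tate, completing the induction.

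The step I expect to be most delicate is the construction and correct orientation of the Mayer--Vietoris triangle for $M^c$ out of the localization triangles, together with the bookkeeping certifying that the smaller schemes $A$ and $A\cap B$ genuinely satisfy the recursive Definition \ref{DefMixedArrangement} and that $\dim(A\cap B)<\dim X$, so that the induction hypothesis is legitimately available. Once these points are secured, the rest is formal manipulation in the triangulated category.
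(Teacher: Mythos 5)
Your argument is correct, and it shares the same skeleton as the paper's proof (induction, localization triangles for $M^c$, thickness of $TDM_{gm}^{eff}(k)$, and condition (ii) of Definition \ref{DefMixedArrangement} applied to a union of the $X_{ij}$), but the actual decomposition is different. The paper inducts on the dimension only and excises the \emph{entire} pairwise-intersection locus $\bigcup_{i\neq j}X_{ij}$ at once: the open complement is then the disjoint union of the $X_i\setminus\bigcup_{j\neq i}X_{ij}$, so $M^c$ of it splits as a direct sum, and each summand is handled by a second localization triangle against $M^c(X_i)$, which is mixed Tate by (i). You instead peel off one irreducible component at a time via the closed Mayer--Vietoris triangle for $A=X_1\cup\dots\cup X_{n-1}$ and $B=X_n$, which forces the finer lexicographic induction on $(\dim X,n)$ but in exchange only ever invokes the dimension-drop for the single sub-collection $\{X_{in}\}_{i<n}$, and it makes explicit the bookkeeping (that $A$ and $A\cap B$ are again configurations, that components are not nested, that $\dim(A\cap B)<\dim X$) which the paper leaves implicit. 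Both routes are valid; yours is marginally more careful about why the induction is well-founded, while the paper's avoids the need to construct the closed Mayer--Vietoris triangle (though, as you note, two localization triangles would let you bypass it as well). One small remark: your appeal to condition (ii) for the sub-collection $\{X_{1n},\dots,X_{(n-1)n}\}$ and for the components of $A$ is exactly the intended use of the "arbitrary subset" clause in the definition, so no gap there.
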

\begin{proof}
We prove by induction on r, the dimension of mixed Tate configuration. The statement is obvious for $r=0$. Suppose that the lemma holds for all mixed Tate configurations of dimension $r<m$. Let $X=X_1 \cup \dots \cup X_n$ be a configuration of mixed Tate varieties of dimension m, where $X_i$s are its irreducible components. For inclusion  $\bigcup_{i \neq j}X_{ij} \subset \bigcup_{i=1}^n X_i$, we have the following induced localization distinguished triangle: 
$$
M^c(\bigcup_{i \neq j}X_{ij})\rightarrow M^c(X_1 \cup...\cup X_n) \rightarrow M^c(\bigcup_{i=1}^n X_i \setminus\bigcup_{i \neq j}X_{ij}) \rightarrow M^c(\bigcup_{i \neq j}X_{ij})[1].
$$
By the induction assumption, $ M^c(\bigcup_{i \neq j}X_{ij})$ is mixed Tate.
\newline
On the other hand we have:
$$
M^c(\bigcup_{i=1}^n (X_i\setminus\bigcup_{i \neq j}X_{ij}))= \bigoplus_{i=1}^n M^c(X_i\setminus\bigcup_{i \neq j}X_{ij}).
$$
It only remains to show that for every $i$, $M^c(X_i\setminus\bigcup_{i \neq j}X_{ij})$ is mixed Tate. To see this, for a given $i$ consider the following distinguished triangle:
$$
M^c(\bigcup_{j \neq i}X_{ij}) \rightarrow M^c(X_i) \rightarrow M^c(X_i\setminus\bigcup_{j \neq i}X_{ij}) \rightarrow M^c(\bigcup_{j \neq i}X_{ij})[1].
$$
Notice that $M^c(\bigcup_{i \neq j}X_{ij})$ is mixed Tate by induction hypothesis. 
\end{proof}

-- \emph{Wonderful Compactification} In \cite{Con-Pro} De Concini and Procesi have introduced the wonderful compactification of a symmetric space. In particular their method produces a smooth canonic compactification $\overline{G}$ of an algebraic group $G$ of adjoint type. Note that in \cite{Con-Pro} they study only the case that the group $G$ is defined over $\mathbb{C}$. Most of the theory carries over for any algebraically closed field of arbitrary characteristic. however there are some subtleties in positive characteristic which we mention later.
\newline 
As a feature of this compactification there is a natural $G \times G$-action on $\overline{G}$, and the arrangement of the orbits can be explained by the associated weight polytope. Let us briefly recall some facts about the construction of $\overline{G}$ and the geometry of its $G \times G$-orbits and their closure.

Let $\rho_{\lambda}:G \rightarrow GL(V_{\lambda})$ be an irreducible faithful representation of $G$ with strictly dominant highest weight $\lambda$. We define the compactification $X_{\lambda}$ of $G$ as follows
$$
X_{\lambda}= \overline{\mathbb{P}(\rho_{\lambda}(G))},
$$  
where the closure is taken inside $\mathbb{P}(End(V_{\lambda}))$.
\newline
It is verified in \cite{DeConciniProcesi} that when $G$ is of adjoint type, $X_{\lambda}$ is smooth and independent of the choice of the highest weight. This compactification is called \emph{wonderful compactification} and we denote it by $\overline{G}$.  

\bigskip

The following proposition explains the geometry of the wonderful compactification and the closures of its $G \times G$-orbits. Furthermore it provides an effective method to compute their cohomologies. Consider the one-to-one correspondence between polytopes and fans, which associates to a polytope its normal fan. Let $\CP_C$ denote the polytope associated to the fan of Weyl chambers and their faces.

\begin{proposition}\label{TheoWnderfulCompProperties} 
Keep the above notation, we have the following statements:

\begin{enumerate}
\item[a)] There is a one-to-one correspondence between the $G \times G$-orbits of $\ol{G}$ and the orbits of the action of the Weyl group on the faces of the polytope $\CP_C$, which preserves the incidence relation among orbits (i.e. consider the faces $\cF_1 \subseteq \cF_2$ of the polytope $\CP_C$, the orbit corresponds to the face $\cF_1$ is contained in the closure of the orbit which corresponds to $\cF_2$).   

\item[b)] Let $I \subset \Delta$ and $\mathcal{F}=\mathcal{F}_I$ the associated face of $\CP_C$. Let $D_{\mathcal{F}}$ be the closure of the orbit corresponding to the face $\mathcal{F}$. Then $D_{\mathcal{F}}=\sqcup_{\alpha\in W\times W}C_{\mathcal{F},\alpha}$, such that for each $\alpha:=(u,v)$ there is a bijective morphism
$$
\mathbb{A}^{n_{{\mathcal{F}},\alpha}}\rightarrow C_{\mathcal{F},\alpha},
$$  
where $n_{\mathcal{F},\alpha}=l(w_0)-l(u)+\mid I\cap I_u\mid +l(v)$ and $w_0$ denotes the longest element of the Weyl group. In particular when $\charakt k=0$ (resp. $\charakt > 0$) $D_{\mathcal{F}}$ is cellular (resp. motivic cellular).
\item[c)] $\overline{G} \setminus G$ is a normal crossing divisor, and its irreducible components form a mixed Tate configuration.   
\end{enumerate}
\end{proposition}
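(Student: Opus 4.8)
All three statements are, in substance, due to De Concini and Procesi \cite{DeConciniProcesi}; the only genuinely new point is to read off the (motivic) cellularity in (b) and then to package (c) through Lemma~\ref{LemMixedArrangement}. The plan is first to recall the local structure of $\overline G$: it is smooth and projective, covered by the $G\times G$-translates of a ``big cell'' $\mathcal Z_0\cong U^-\times\overline T^{\,0}\times U\cong\BA^{\dim G}$, where $U,U^-$ are the unipotent radicals of $B,B^-$ and $\overline T^{\,0}\cong\BA^r$ is the affine toric chart of $\overline T$ attached to the fundamental Weyl chamber; that $\overline T$ is the complete toric variety whose fan is the fan of Weyl chambers (the normal fan of $\CP_C$); that the $G\times G$-orbits $\mathcal O_I$ are indexed by $I\subseteq\Delta$ with $\mathcal O_I\subseteq\overline{\mathcal O_J}\iff I\subseteq J$; and that $\overline G\setminus G=D_1\cup\dots\cup D_r$ is a simple normal crossing divisor with $\bigcap_{i\in S}D_i=\overline{\mathcal O_{\Delta\setminus S}}$ for every $S\subseteq\{1,\dots,r\}$. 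I would quote these from \cite{DeConciniProcesi}, indicating the minor modifications needed in positive characteristic.

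For (a): the $G\times G$-orbits of $\overline G$ are exactly the $G\times G$-saturations of the $T$-orbits of $\overline T^{\,0}$, hence are indexed by the faces of the fundamental chamber, i.e.\ by $I\subseteq\Delta$; and this set is precisely the set of $W$-orbits of faces of $\CP_C$ via the orbit--cone dictionary for the toric variety $\overline T$. For the incidence relation: if $\cF_1\subseteq\cF_2$ are faces of $\CP_C$ then the corresponding cones satisfy $\sigma_1\supseteq\sigma_2$, so the torus orbits obey $O_{\sigma_1}\subseteq\overline{O_{\sigma_2}}$ in $\overline T$; applying the $G\times G$-action gives $\mathcal O_{\cF_1}\subseteq\overline{\mathcal O_{\cF_2}}$, and the converse is the same computation read backwards. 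One has only to fix once and for all the convention linking $I$ to the walls defining $\cF_I$, normalized so that $D_{\cF_\emptyset}=G/B\times G/B$ (the closed orbit) and $D_{\cF_\Delta}=\overline G$.

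For (b): I would realize the decomposition of $D_\cF:=\overline{\mathcal O_I}$ as its Bialynicki--Birula decomposition with respect to a regular one-parameter subgroup $\eta$ of $T\times T$. Being $G\times G$-stable, $D_\cF$ is $(T\times T)$-stable, and it is smooth because it is a partial intersection of the SNC divisor $\overline G\setminus G$ (the input from (c), which is itself cited from \cite{DeConciniProcesi} and independent of (b)); its $(T\times T)$-fixed locus consists of the $|W|^2$ points $x_{(u,v)}$ of the closed orbit, where $x_{(u,v)}$ is the image under $(\dot u,\dot v)$ of the toric fixed point of $\overline T^{\,0}$, and all of these lie in $D_\cF$ since the closed orbit does. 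The cell $C_{\cF,(u,v)}$ is then the $\eta$-attracting set of $x_{(u,v)}$ in $D_\cF$; smoothness makes it an affine space when $\charakt k=0$, while in positive characteristic one obtains only a bijective morphism $\BA^{n_{\cF,(u,v)}}\to C_{\cF,(u,v)}$ --- which is harmless here, since by our standing hypothesis motives are taken with $\BQ$-coefficients or $k$ admits resolution of singularities, so universal homeomorphisms become invertible on $M^c$. The exponent $n_{\cF,(u,v)}$ is the number of $\eta$-positive weights of the tangent space $T_{x_{(u,v)}}D_\cF$; decomposing that tangent space along the $U^-$-, $\overline T^{\,0}$- and $U$-directions and tracking which toric coordinates survive in $D_\cF$ (this is where the term $\mid I\cap I_u\mid$ appears) gives the stated value $l(w_0)-l(u)+\mid I\cap I_u\mid+l(v)$; this is the computation carried out in \cite{DeConciniProcesi}, which I would simply quote. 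Ordering the cells by dimension then exhibits $D_\cF$ as cellular (resp.\ motivic cellular) in the sense of Definition~\ref{relativecellular1}, with all $Y_i=\Spec k$.

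For (c): each $D_i$ is an irreducible divisor in $\overline G$, and every partial intersection $\bigcap_{i\in S}D_i=\overline{\mathcal O_{\Delta\setminus S}}$ is, by (b), cellular, hence mixed Tate by Remark~\ref{puretate} (equivalently by Proposition~\ref{freechow} or Proposition~\ref{MotivewcsofCellularVariety}). To see that $\{D_i\}_{i=1}^r$ is a configuration of mixed Tate varieties in the sense of Definition~\ref{DefMixedArrangement}, I would induct downward on dimension, mimicking the proof of Lemma~\ref{LemMixedArrangement}: the irreducible components of the union of any subfamily of the partial intersections are themselves partial intersections --- each irreducible, cellular and mixed Tate --- and the intersection of two distinct, hence incomparable, such components $\overline{\mathcal O_{\Delta\setminus S}}\cap\overline{\mathcal O_{\Delta\setminus S'}}=\overline{\mathcal O_{\Delta\setminus(S\cup S')}}$ is again a partial intersection, now of strictly smaller dimension, to which the inductive hypothesis applies; the base case is the closed orbit $G/B\times G/B$, which is cellular. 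Applying this to the subfamily $\{D_i\}$ itself gives the assertion (and then Lemma~\ref{LemMixedArrangement} yields that $M^c(\overline G\setminus G)$ is mixed Tate, although this is not needed here). The main obstacle throughout is the positive-characteristic bookkeeping: one must check that the smoothness of $\overline G$, the description of the $G\times G$-orbits and their closures, the SNC property of the boundary, and the Bialynicki--Birula dimension count all survive in characteristic $p$ (with ``bijective morphism'' replacing ``isomorphism'' where the cells and toric charts are concerned), and that this weakening is invisible in the motivic categories we use; granting \cite{DeConciniProcesi}, everything else is assembly with Lemma~\ref{LemMixedArrangement}, Proposition~\ref{freechow} and the Bialynicki--Birula decomposition.
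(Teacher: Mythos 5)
Your argument is correct in substance, but it takes a genuinely different route from the paper for parts a) and b). The paper disposes of a) by citing Timashev's orbit--face correspondence and of b) by citing Renner's explicit cell decomposition of $\ol{G}$, then handles the characteristic-$p$ defect (bijective morphism rather than isomorphism) via Zariski's main theorem in characteristic $0$ and via the fact that universal homeomorphisms induce isomorphisms of $h$-sheaves in positive characteristic; c) is then, as in your write-up, a combination of a), b) and Remark \ref{puretate}. You instead reprove a) through the orbit--cone dictionary for the toric variety $\ol{T}$ and the big-cell structure, and you obtain b) from the Bia{\l}ynicki--Birula decomposition of the smooth orbit closure $D_{\cF}$ with respect to a regular cocharacter of $T\times T$, reading off the cell dimensions from the positive weights on the tangent spaces at the $|W|^2$ fixed points of the closed orbit. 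This is more self-contained and arguably more illuminating, at the price of two points you should tidy up: the dimension formula $l(w_0)-l(u)+|I\cap I_u|+l(v)$ is Renner's computation, not De Concini--Procesi's, so the citation should be corrected (or the weight count actually carried out); and ``ordering the cells by dimension'' is not quite the right statement --- to get a filtration by closed subschemes as in Definition \ref{relativecellular1} you need the filtrability of the BB decomposition (closure of a cell contained in the union of that cell and smaller ones for the BB partial order on fixed points), which does hold for smooth projective varieties but is the fact you should invoke. With those repairs your argument is a valid, and somewhat more detailed, alternative to the paper's citation-based proof.
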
 

\begin{proof}
For the proof of a) we refer to \cite[Prop.8]{Tima}. The existence of the bijective morphism in part b) is the main result of Renner in \cite{Renner}. The fact that $D_{\cF}$ is cellular in characteristic zero follows from Zariski main theorem. In positive characteristic this follows from the fact that any universal topological homeomorphism induces isomorphism of the associated h-sheaves, see \cite[Prop. 3.2.5]{VVII}. Finally c) follows from a), b) and remark \ref{puretate}.
\end{proof}

\begin{theorem}\label{motive-of-G-bundle1}
Assume that $\charakt k=0$. Let $G$ be a connected reductive group over $k$. Let $\cG$ be a $G$-bundle over an irreducible variety $X\in \CO b(Sm_k)$. Suppose that $\CG$ is locally trivial for Zariski topology and $X$ is geometrically mixed Tate, then $M(\cG)$ is also geometrically mixed Tate.
\end{theorem}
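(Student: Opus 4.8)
The plan is to test the conclusion over a splitting field of $G$, replace $\cG$ by the associated flag bundle $\cG/B$ --- which is \emph{proper} over $X$, whereas $\cG$ itself is only affine over $X$ --- apply the motivic Leray--Hirsch theorem to that bundle, and then climb back up the tower $\cG\to\cG/U\to\cG/B$, whose two remaining stages are a torus torsor and an iterated affine-space bundle and hence are harmless for motives.

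First I would reduce to the split case. By Proposition~\ref{geometricmixedtate}, $M(\cG)$ is geometrically mixed Tate provided its restriction to $DM_{gm}^{eff}(E)$ is mixed Tate for one finite separable extension $E/k$; applying the same proposition to the compact-support motive of $X$, I may choose $E$ so that in addition $G_E$ is split and $M^c(X_E)$ --- equivalently, as $X$ is smooth, $M(X_E)$ --- is mixed Tate. Replacing $X$ by a connected component of $X_E$ (legitimate, since $M$ of a disjoint union is the direct sum, the mixed Tate subcategory is thick, and $\cG_E$ is still Zariski locally trivial), I may assume that $G$ is split with split maximal torus $T$ and Borel subgroup $B=T\ltimes U$, that $X$ is smooth and irreducible with $M(X)$ mixed Tate, and that $\cG\to X$ is Zariski locally trivial.

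Now factor the structure morphism as $\cG\twoheadrightarrow\cG/U\twoheadrightarrow\cG/B\twoheadrightarrow X$. The map $\cG/B\to X$ is the bundle with fibre $G/B$ associated with $\cG$; it is smooth and proper, and it is Zariski locally trivial because $\cG\to X$ is. Since $G/B$ is cellular (Corollary~\ref{motive-general-flag-var}), Theorem~\ref{Leray Hirsch for Voevodsky motives} gives $M(\cG/B)\cong\coprod_{p\geq 0}CH_p(G/B)\otimes M(X)(p)[2p]$, and because $CH_p(G/B)$ is free of finite rank (Schubert classes) this is a finite direct sum of Tate twists of $M(X)$, hence mixed Tate. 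Next, $\cG/U\to\cG/B$ is a torsor under $T\cong\mathbb{G}_m^{r}$; splitting $T$ by a chain of subtori with quotients $\mathbb{G}_m$ realizes $\cG/U$ as an iterated $\mathbb{G}_m$-torsor over $\cG/B$, and a single $\mathbb{G}_m$-torsor over a smooth base $Y$ is the complement of the zero section of a line bundle $L$, so the Gysin triangle $M(L\setminus Y)\to M(L)\to M(Y)(1)[2]\to M(L\setminus Y)[1]$ together with $M(L)\cong M(Y)$ shows inductively that $M(\cG/U)$ is mixed Tate. Finally $\cG\to\cG/U$ is a torsor under the split unipotent group $U$; using a central series of $U$ with $\mathbb{G}_a$-quotients it is an iterated affine-space bundle, so $\mathbb{A}^1$-homotopy invariance gives $M(\cG)\cong M(\cG/U)$. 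This is mixed Tate over $E$, so $M(\cG)$ is geometrically mixed Tate.

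The point to get right is the overall strategy rather than any single computation: since $\cG\to X$ is affine (as $G$ is affine) and therefore far from proper, one cannot feed $\cG\to X$ to the motivic Leray--Hirsch theorem directly; one pays for passing to the proper flag bundle $\cG/B$ by having to control the fibres of $\cG\to\cG/B$, and this is painless only because $B$ is a special group, so those fibres assemble into a $T$-torsor (iterated $\mathbb{G}_m$-torsor) followed by a $U$-torsor (iterated affine-space bundle). A variant closer to the wonderful-compactification material would embed $G$ equivariantly into a smooth proper cellular variety --- for $G$ of adjoint type, $\overline{G}$ itself, whose $G\times G$-orbit closures are cellular and whose boundary is a mixed Tate configuration by Proposition~\ref{TheoWnderfulCompProperties} --- form the associated bundle $\cG\times^{G}\overline{G}$, apply Leray--Hirsch there, and peel off the boundary bundle by a localization sequence; the catch is that this route naturally produces the motive of the adjoint-quotient bundle $\cG/Z(G)$, after which one must still descend along the central torus, so I would favour the flag-bundle tower above.
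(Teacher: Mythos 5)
Your argument is correct, but it follows a genuinely different route from the paper's. The paper never touches the flag bundle: it embeds the (adjoint) group into its wonderful compactification $\overline{G}$, forms the proper bundle $\overline{\cG}=\cG\times^{G}\overline{G}$, applies Theorem~\ref{Leray Hirsch for Voevodsky motives} to $\overline{\cG}\to X$, and then removes the boundary $\overline{\cG}\setminus\cG$ by a localization triangle, using Proposition~\ref{TheoWnderfulCompProperties} and Lemma~\ref{LemMixedArrangement} to see that the boundary fibration is a mixed Tate configuration; for general reductive $G$ it then descends from the adjoint bundle along the central torus via a toric compactification (and at that point it quietly assumes $Z(G)$ is connected). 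You instead climb the tower $\cG\to\cG/U\to\cG/B\to X$: Leray--Hirsch is applied only to the single proper cellular fibration $\cG/B\to X$ (with fibre $G/B$, cellular by Corollary~\ref{motive-general-flag-var}), and the remaining stages are handled by the line-bundle Gysin triangle for the iterated $\mathbb{G}_m$-torsor $\cG/U\to\cG/B$ and by $\mathbb{A}^1$-invariance for the $U$-torsor $\cG\to\cG/U$; your reduction to a split form over a finite separable extension via Proposition~\ref{geometricmixedtate} is also sound. What your route buys is economy: no wonderful compactification, no boundary combinatorics, no case split between adjoint and general reductive $G$ (so no hidden connectedness hypothesis on the centre), at the price of the mild bookkeeping that the intermediate quotients exist and are smooth, which holds here because $\cG\to\cG/B$ is a Zariski-locally trivial $B$-torsor. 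What the paper's route buys is reusability: the orbit-closure and boundary-configuration analysis is exactly the input for the nested filtration of Section~\ref{Filtration on the motive of G-bundles} over a general base and for the parahoric application, so the authors develop it once and specialize it here. Both proofs use Zariski-local triviality and $\charakt k=0$ at the same point, namely to invoke Theorem~\ref{Leray Hirsch for Voevodsky motives}.
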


\begin{proof}
We may assume the base field $k$ is algebrically closed.
Let us first assume that $G$ is a semisimple group of adjoint type. 
 Then $G$ admits a wonderful compactification $\overline{G}$ which is smooth. By construction, there is a $(G \times G)$-action on $\overline{G}$. Let $G$ act on $\overline{G}$ via the first factor and consider the $\overline{G}$-fibration  $\overline{\mathcal{G}}:= \mathcal{G} \times _{G}\overline{G}$ over $X$. Clearly we have the open immersion $\mathcal{G} \hookrightarrow \overline{\mathcal{G}}$ of varieties over $X$. So we get the following generalized Gysin distinguished triangle:
$$
M(\mathcal{G}) \rightarrow M(\overline{\mathcal{G}}) \rightarrow M^c(\overline{\mathcal{G}}  \setminus \mathcal{G})^\ast(n)[2n] \rightarrow M(\mathcal{G})[1]
$$
where $n:= dim \overline{\mathcal{G}}$, see \cite[page 197]{VV}.
\newline
By proposition \ref{TheoWnderfulCompProperties}, $\overline{G}$ admits a cell decomposition.
Therefore by theorem \ref{Leray Hirsch for Voevodsky motives}, $M^c(\overline{\mathcal{G}})$ is mixed Tate. So to prove the theorem it is enough to show $M^c({\overline{\mathcal{G}} \setminus \mathcal{G}})$ is mixed Tate.
\newline
Let's now look at the geometry of the closures of $(G \times G)$-orbits. As it is mentioned in proposition \ref{TheoWnderfulCompProperties} a), these orbit closures could be indexed by a subset of  faces of Weyl chamber in such a way that the incidence relation between faces gets preserved. Note that by proposition \ref{TheoWnderfulCompProperties} b) the closure of these orbits also admit a cell decomposition. Thus by theorem \ref{Leray Hirsch for Voevodsky motives} the irreducible components of $\overline{\mathcal{G}}\setminus \mathcal{G}$ form a mixed Tate configuration. Now lemma \ref{LemMixedArrangement} implies that $M^c(\overline{\mathcal{G}}\setminus \mathcal{G})$ is mixed Tate. 
\newline
Now assume that $G$ is a reductive algebraic group. We assume that $Z(G)$ is connected. Note that since $G$ is reductive $Z:=Z(G)$ is a torus. Let $\mathcal{G}'$ be the associated $G^{ad}$-bundle. By the above statements we know that $M(\mathcal{G}')$ is mixed Tate.  
Notice that any torus bundle is locally trivial for the Zariski topology by the theorem Hilbert90. Take a toric compactification $\overline{Z}$ of $Z$ and embed $\mathcal{G}$ into $\mathcal{Z} := \mathcal{G} \times^Z \overline{Z}$, which is a toric fibration over $\mathcal{G}'$. Now the irreducible components of the complement of $\mathcal{G}$ in $\mathcal{Z}$ are toric fibrations over $\mathcal{G}'$. Since fibers are toric (and hence cellular) and $M(\mathcal{G}')$ is mixed Tate therefore by theorem \ref{Leray Hirsch for Voevodsky motives} we argue that these irreducible components form a mixed Tate configuration and we may argue as above.
\end{proof}

The assumptions that the $G$-bundle $\CG$ is locally trivial for Zariski topology and also $\charakt k=0$ may look restrictive.  As we will see below these assumptions are not necessary when $X$ is geometrically cellular. Before proving this let us state the following lemma.

\begin{lemma}\label{ReuctiveGroup}
Let $G$ be a connected reductive group over $k$, then the motive associated to $G$ is geometrically mixed Tate.
Furthermore if $G$ is a split reductive group then $M(G)$ is mixed Tate.
\end{lemma}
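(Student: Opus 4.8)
The plan is to treat a connected reductive group $G$ as a tower of fibrations that we already understand, reducing step by step to the case handled by Proposition~\ref{TheoWnderfulCompProperties} and Theorem~\ref{Leray Hirsch for Voevodsky motives}. First I would observe that $M(G)$ is geometrically mixed Tate if and only if it is mixed Tate after base change to $\overline{k}$ (Proposition~\ref{geometricmixedtate}), so for the first claim we may assume $k=\overline{k}$; there $G$ is automatically split, and both assertions follow at once if we prove the second. So the real content is: \emph{a split reductive $G$ has $M(G)$ mixed Tate.}

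The key steps, in order. First, reduce to the adjoint semisimple case. Write $G^{ad}=G/Z(G)$; the quotient map $G\to G^{ad}$ is a torus bundle (its fibres are cosets of the radical torus $Z(G)^\circ$, possibly twisted by a finite central subgroup — here one uses that over a field $G\to G^{ad}$ can be analyzed via the canonical isogeny $Z(G)^\circ\times G^{der}\to G$, reducing to $G^{der}$ semisimple and simply connected or adjoint). Since split tori are special (Hilbert~90), any torus bundle is Zariski-locally trivial, and a split torus $\mathbb{G}_m^r$ is cellular (its motive is a sum of Tate twists, being an iterated $\mathbb{G}_m = \mathbb{A}^1\setminus\{0\}$). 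So by Theorem~\ref{Leray Hirsch for Voevodsky motives} applied to $G\to G^{ad}$ with base $X=G^{ad}$, once $M(G^{ad})$ (and, going through the isogeny, $M(G^{der})$) is mixed Tate, so is $M(G)$. Second, handle $G$ semisimple of adjoint type: here $G$ is a \emph{dense open} in its wonderful compactification $\overline{G}$, which by Proposition~\ref{TheoWnderfulCompProperties}(b) is cellular, hence $M(\overline{G})$ (equivalently $M^c(\overline{G})$, as $\overline G$ is smooth proper) is pure Tate. The boundary $\overline{G}\setminus G$ is, by Proposition~\ref{TheoWnderfulCompProperties}(c), a configuration of mixed Tate varieties, so $M^c(\overline{G}\setminus G)$ is mixed Tate by Lemma~\ref{LemMixedArrangement}. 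Then the Gysin triangle
$$
M(G)\rightarrow M(\overline{G})\rightarrow M^c(\overline{G}\setminus G)^{\ast}(n)[2n]\rightarrow M(G)[1],
\qquad n=\dim\overline{G},
$$
(using smoothness of $\overline G$ and purity, as on \cite[p.~197]{VV}) together with the fact that $TDM_{gm}^{eff}(k)$ is a thick triangulated subcategory, forces $M(G)$ to be mixed Tate. Third, assemble: $M(G^{ad})$ mixed Tate $\Rightarrow$ $M(G^{der})$ mixed Tate (via the central isogeny, which induces an isomorphism on motives up to the motive of a finite group scheme, itself a sum of copies of $\mathbb{Z}(0)$ in characteristic zero / after the standing conventions) $\Rightarrow$ $M(G)$ mixed Tate by the torus-bundle argument. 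This is exactly the structure already rehearsed inside the proof of Theorem~\ref{motive-of-G-bundle1}, now specialized to $X=\Spec k$ with the trivial bundle $\mathcal{G}=G$.

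For the geometrically mixed Tate statement for an arbitrary connected reductive $G/k$ not necessarily split: pass to a finite separable extension $E/k$ splitting $G$, apply the split case over $E$, and invoke Proposition~\ref{geometricmixedtate} (equivalently Proposition~\ref{geometricmixedtate} with $\overline k$ in place of $E$).

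The main obstacle I expect is the bookkeeping in the reduction from $G^{ad}$ back to $G$: controlling the motive of the fibre of $G\to G^{ad}$ when $Z(G)$ is disconnected or when $G^{der}$ is not simply connected forces one to deal with motives of finite (étale, under our conventions) group schemes and with non-trivial central isogenies. One clean way around this, consistent with the hypotheses in Theorem~\ref{motive-of-G-bundle1} where the authors \emph{assume $Z(G)$ connected}, is to first do the argument assuming $Z(G)$ connected (so the fibre of $G\to G^{ad}$ is literally a split torus and Theorem~\ref{Leray Hirsch for Voevodsky motives} applies verbatim), and then note that for general reductive $G$ one has a central isogeny from a group with connected center whose kernel is finite of order prime to the characteristic (after our standing conventions), inducing an isomorphism of geometric motives; alternatively one reduces directly via $Z(G)^\circ\times G^{sc}\to G$. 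Everything else — cellularity of $\overline G$, the boundary being a mixed Tate configuration, the thickness of $TDM_{gm}^{eff}(k)$, and the Gysin triangle — is already in hand from the cited results.
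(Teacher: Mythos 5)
Your route is genuinely different from the paper's. You reduce to the adjoint semisimple case via the central quotient $G\to G^{ad}$ and then run the wonderful-compactification/Gysin argument of Theorem~\ref{motive-of-G-bundle1} on $\Spec k$. The paper instead fibres $G$ over $G/T$ for a \emph{maximal torus} $T$: it compactifies the fibre to get the $\mathbb{P}^r$-bundle $\overline{\mathcal{T}}=G\times^T\mathbb{P}^r_k$ over $G/T$, computes $M(\overline{\mathcal{T}})=M(\mathbb{P}^r_k)\otimes M(G/T)$ by the projective bundle formula, identifies $M(G/T)$ with the pure Tate motive $M(G/B)$ via the affine fibration $G/T\to G/B$ and Corollary~\ref{motive-general-flag-var}, and then removes the toric boundary (a mixed Tate configuration of projective subbundles) by localization. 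The paper's choice of fibration is the decisive simplification: the fibre of $G\to G/T$ is always a connected split torus, for \emph{any} connected reductive $G$, so no case analysis on $Z(G)$ and no wonderful compactification are needed; your choice buys a more self-contained link to Proposition~\ref{TheoWnderfulCompProperties}, but at the cost of the reduction step you yourself flag.

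That reduction step is where your argument has a real gap. First, a central isogeny does \emph{not} induce an isomorphism of (geometric) motives: for the squaring isogeny $\mathbb{G}_m\to\mathbb{G}_m$ the induced map on $M(\mathbb{G}_m)\cong\mathbb{Z}\oplus\mathbb{Z}(1)[1]$ is multiplication by $2$ on the second summand, so its cone is a nonzero torsion object integrally. One can still conclude mixed-Tate-ness with $\mathbb{Q}$-coefficients by a transfer/direct-summand argument for the finite \'etale cover, but that is an additional argument you have not supplied, and it does not cover the integral case allowed by the standing conventions. Second, the auxiliary group $Z(G)^{\circ}\times G^{sc}$ does not in general have connected centre (its centre contains $Z(G^{sc})$), so the proposed intermediate object does not obviously fall under the connected-centre case either; a genuine $z$-extension would have a torus kernel and would require yet another fibrewise toric compactification, since Theorem~\ref{Leray Hirsch for Voevodsky motives} demands a \emph{proper} fibre and so cannot be applied to a torus bundle directly. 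Your adjoint-case argument and the passage to $\overline{k}$ via Proposition~\ref{geometricmixedtate} are fine; to close the lemma for arbitrary connected reductive $G$ you should either supply the rational transfer argument carefully or, better, switch to the paper's maximal-torus fibration, which makes the issue disappear.
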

\bigskip
\begin{proof}
Assume that $k$ is algebraically closed. Let $T$ be a maximal split torus in $G$ of rank $r$. We consider the projection $\pi:G \rightarrow G/T$ and view $G$ as a $T$-bundle over $G/T$. Consider $\BP^r_k$ as a compactification of $T$ and let $\overline{\mathcal{T}} := G \times ^{T}\BP^r_k $ be the associated projective bundle over $G/T$.  By projective bundle formula 
$$
M(\overline{\mathcal{T}})= M(\BP^r_k ) \otimes M(G/T),
$$
see \cite[Theorem 15.12]{MVW}.
On the other hand $B = T \ltimes U$, where $B$ is a Borel subgroup of $G$ containing $T$ and $U$ is the unipotent  part of $B$. Notice that, as a variety, $U$ is isomorphic to an affine space over $k$. Since the fibration $G \rightarrow G/B$ is the composition of $G \rightarrow G/T$ and $U$-fibration $G/T \rightarrow G/B $, we deduce by proposition \ref{motive-general-flag-var} that $M(\overline{\mathcal{T}})$ is pure Tate. Finally, in the similar way as in the proof of theorem \ref{motive-of-G-bundle1}, one can see that the complement of $G$ in $\ol \CT$ form a mixed Tate configuration and conclude that $M(G)$ is mixed Tate. The second part of the lemma is similar, only one doesn't require to pass to an algebraic closure.
\end{proof}

\begin{theorem}\label{motive-of-G-bundle1b}
Let $G$ be a connected reductive group over $k$. Let $\cG$ be a $G$-bundle over an irreducible variety $X\in \CO b(Sm_k)$. Suppose in addition that $X$ is geometrically cellular, then $M(\cG)$ is geometrically mixed Tate.
\end{theorem}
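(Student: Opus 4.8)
The plan is to reduce to an algebraically closed base field, exploit the resulting cell decomposition of $X$, and assemble $M^c(\cG)$ out of its restrictions to the cells, over each of which $\cG$ becomes a trivial bundle. First I would invoke Proposition \ref{geometricmixedtate} to reduce to the case $k=\ol k$: the hypotheses and the conclusion are all insensitive to an algebraic base extension, so after replacing $X,G,\cG$ by $X_{\ol k},G_{\ol k},\cG_{\ol k}$ I may assume $k$ algebraically closed and $X$ cellular in the sense of Definition \ref{relativecellular1}, i.e. there is a filtration $\emptyset=X_{-1}\subset X_0\subset\dots\subset X_n=X$ by closed subschemes with each $U_i:=X_i\setminus X_{i-1}$ isomorphic to an affine space $\BA^{d_i}$. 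Note $\cG\to X$ is smooth (it is a $G$-bundle over the smooth $X$), so $\cG$ is a smooth $k$-scheme, of pure dimension $N:=\dim X+\dim G$ since $X$ is irreducible; and each $\cG|_{U_i}$ is smooth.

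The key geometric input is that over each cell the bundle is trivial: $\cG|_{U_i}$ is a $G$-torsor over the affine space $\BA^{d_i}$ over the algebraically closed field $k$, hence $\cG|_{U_i}\cong G\times\BA^{d_i}$ — this is the theorem of Raghunathan and Ramanathan on principal bundles over affine space (for $G=\GL_n$ it is the Quillen--Suslin theorem, and the general connected reductive case, including positive characteristic, reduces to that together with the vanishing of $\Pic$ and of the Brauer group of $\BA^{d_i}_{\ol k}$). Granting this, the K\"unneth formula for $M^c$ and the identification $M^c(\BA^{d_i})\cong\BZ(d_i)[2d_i]$ give $M^c(\cG|_{U_i})\cong M^c(G)(d_i)[2d_i]$. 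Since $k$ is algebraically closed the group $G$ is split, so $M(G)$ is mixed Tate by Lemma \ref{ReuctiveGroup}; and since $G$ is smooth and equidimensional, duality identifies $M^c(G)$ with $M(G)^{\vee}(\dim G)[2\dim G]$, which is again mixed Tate — it is effective (being $M^c$ of a scheme) and a Tate twist of the dual of a mixed Tate motive. Hence $M^c(\cG|_{U_i})\in TDM_{gm}^{eff}(k)$ for every $i$.

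The assembly is then a d\'evissage along the filtration: for each $i$ the closed immersion $\cG|_{X_{i-1}}\hookrightarrow\cG|_{X_i}$ with open complement $\cG|_{U_i}$ yields a localization triangle $M^c(\cG|_{X_{i-1}})\to M^c(\cG|_{X_i})\to M^c(\cG|_{U_i})\to$, so by induction on $i$ (starting from $\cG|_{X_{-1}}=\emptyset$) each $M^c(\cG|_{X_i})$ lies in the triangulated subcategory $TDM_{gm}^{eff}(k)$; taking $i=n$ shows $M^c(\cG)$ is mixed Tate. Finally, applying duality to the smooth equidimensional variety $\cG$ gives $M(\cG)\cong M^c(\cG)^{\vee}(N)[2N]$, which is therefore mixed Tate; since $k$ was taken algebraically closed, $M(\cG)$ is geometrically mixed Tate. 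In contrast with Theorem \ref{motive-of-G-bundle1} this argument is uniform in the connected reductive group $G$ and requires no wonderful-compactification bookkeeping.

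I expect the triviality statement over the cells to be the main point — or rather the step that compensates for the tools that are unavailable here: a $G$-bundle need not be Zariski-locally trivial, and the motivic Leray--Hirsch theorem (Theorem \ref{Leray Hirsch for Voevodsky motives}) is not available in positive characteristic, but over an affine cell the bundle is honestly trivial, so neither tool is needed. One must check that the cited triviality result genuinely covers every connected reductive $G$ in positive characteristic; everything else — the localization triangles, the K\"unneth formula, and the duality identifications — is formal and valid under the paper's standing hypotheses (resolution of singularities, or rational coefficients).
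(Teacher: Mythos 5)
Your argument is essentially the paper's: reduce to $\ol{k}$, use Raghunathan's theorem to trivialize $\cG$ over each affine cell $U_i\cong\BA^{d_i}$, and induct along the cell filtration via localization triangles, with Lemma \ref{ReuctiveGroup} supplying that $M(G)$ is mixed Tate for the split group $G_{\ol k}$. The only divergence is bookkeeping: the paper runs the induction directly on $M$ via Gysin triangles, whereas you work with $M^c$ and dualize the smooth equidimensional $\cG$ at the end --- which incidentally sidesteps the smoothness of the closed strata $X_i$ that the paper's Gysin triangles implicitly require.
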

\begin{proof}
We may assume that $k$ is algebraically closed. Let 
$$
\emptyset= X_{-1} \subset X_0 \subset ... \subset X_n=X 
$$
be a cell decomposition for $X$ where $U_i:=X_i \setminus X_{i-1}$ is isomorphic to $\mathbb{A}^{d_i}_k$.
We prove by induction on $n$. Consider the following distinguished gysin triangle:
$$
M(\cG|_{U_n}) \rightarrow M(\cG) \rightarrow M(\cG|_{X_{n-1}}). 
$$
By Raghunathan's theorem (i.e. the generalization of the Serre's conjecture about the triviality of vector bundles over an affine space) the restriction of $\cG$ to $U_n$ is trivial over $U_n$, see \cite{Raghunathan2}. Therefore $M(\CG|_{U_n})$ is mixed Tate. On the other hand $M(\cG|{X_{n-1}})$ is mixed Tate by induction hypothesis.
\end{proof}

Recall that the Voevodsky's theory of motives over perfect field $k$ can be established for the schemes over more general base $S$. We refer the reader to the following article of Voevodsky \cite{VVI}, or to the article \cite{CD} of Cisinski and Deglise. They construct the triangulated category of mixed motives $DM(S)$. Here $S$ is any locally noetherian scheme of finite dimension. This category is constructed from the category of Nisnevich sheaves with transfers over $X$. For the details about the category of mixed Tate motives over a number ring we refer to \cite{Sch}.\\

Let $\mathcal{A}$ be a henselian discrete valuation ring with perfect field of fractions $K$ and perfect residue field $k$ of characteristic $p > 0$.  Let $G$ be a connected reductive algebraic group over $K$.
Bruhat and Tits have associated to $G$ certain smooth affine $\CA$-group schemes $P$ with generic fiber $P_K = G$ known as \emph{parahoric group schemes}, see \cite{B-TII}. 

\begin{proposition}\label{motive of parahoric group}

Keep the above notation. Assume that the maximal torus of $G$ splits over an unramified extension of $K$. Let $R_u(P_k)$ denote the unipotent radical of the special fiber $P_k$. Assume further that $M(R_u(P_k))$ is mixed Tate. Then the motive $M(P)$ becomes mixed Tate over an \'etale covering of $\mathcal{A}$.   
\end{proposition}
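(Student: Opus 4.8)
The plan is to reduce the statement to the already-established case of reductive groups over a field (Lemma \ref{ReuctiveGroup}) together with the structure theory of parahoric group schemes à la Bruhat--Tits. First I would observe that a smooth affine group scheme $P$ over $\mathcal{A}$ is, as a scheme, built from its special fiber $P_k$ and generic fiber via a relative cell-type decomposition; the key point is that $P$ is smooth over $\mathcal{A}$, so $P_k$ is a smooth affine $k$-group of the same dimension, and the motive $M(P)$ in $DM(\mathcal{A})$ (or after base change) is controlled by $M(P_k)$. Concretely, using the Gysin/localization triangle associated to the closed immersion of the special fiber $P_k \hookrightarrow P$ with open complement $P_K$, it suffices to show that both $M(P_k)$ and $M(P_K)$ are mixed Tate (over a suitable étale cover), since the category of mixed Tate motives is thick and closed under extensions by distinguished triangles.

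For the generic fiber, $P_K = G$ is a connected reductive group over $K$; by hypothesis its maximal torus splits over an \emph{unramified} extension $K'/K$, hence $G_{K'}$ is split reductive, and Lemma \ref{ReuctiveGroup} gives that $M(G_{K'})$ is mixed Tate. Passing from $K$ to $K'$ corresponds precisely to passing from $\mathcal{A}$ to its étale (unramified) extension $\mathcal{A}'$ with fraction field $K'$, which is the "étale covering of $\mathcal{A}$" in the statement. For the special fiber, the Bruhat--Tits theory gives an exact sequence $1 \to R_u(P_k) \to P_k \to M \to 1$ where $M$ is a connected reductive $k$-group (the reductive quotient), and $R_u(P_k)$ is the unipotent radical; as a variety $P_k$ is a product (Zariski-locally, in fact globally since unipotent groups are special) of $R_u(P_k)$ and $M$. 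By the hypothesis $M(R_u(P_k))$ is mixed Tate, and since the splitting field assumption descends to $k$ (the torus of $M$ splits after an unramified, hence residually finite, base change), $M(M)$ is mixed Tate by Lemma \ref{ReuctiveGroup} over the same étale cover. Then $M(P_k) \cong M(R_u(P_k)) \otimes M(M)$ by the Künneth formula, so it is mixed Tate, being a tensor product of mixed Tate motives.

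I would assemble these pieces as follows: work over $\mathcal{A}' := \mathcal{A} \otimes_{\mathcal{A}} (\text{unramified cover splitting the torus})$, so that both $G_{K'}$ and the reductive quotient $M_{k'}$ become split; apply Lemma \ref{ReuctiveGroup} to get $M(G_{K'})$ and $M(M_{k'})$ mixed Tate; combine with the hypothesis on $R_u(P_k)$ and Künneth to get $M(P_{k'})$ mixed Tate; and finally run the localization triangle
$$
M(P_{K'}) \to M(P_{\mathcal{A}'}) \to M^c(P_{k'})^\ast(c)[2c] \to M(P_{K'})[1]
$$
for the appropriate twist $c$ (codimension of the special fiber), using that $M^c$ of a smooth cellular-type variety is again mixed Tate (dual of a mixed Tate motive is mixed Tate), to conclude $M(P_{\mathcal{A}'})$ is mixed Tate.

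The main obstacle I anticipate is making precise the relationship between the motive of the arithmetic scheme $P/\mathcal{A}$ and the motives of its fibers: one needs the localization/Gysin formalism in $DM(\mathcal{A})$ (available through \cite{CD}, \cite{VVI}), and one must be careful that "mixed Tate" over the base $\mathcal{A}$ is the right notion and that base change to the fibers behaves well. A secondary technical point is verifying that the Bruhat--Tits reductive quotient of $P_k$ indeed inherits the splitting behavior from the unramified-splitting hypothesis on $G$ — this is where the assumption "the maximal torus of $G$ splits over an unramified extension" is used essentially, since unramified extensions of $K$ correspond to finite extensions of the residue field $k$, keeping us within the reach of Lemma \ref{ReuctiveGroup} after a finite (étale) residue extension.
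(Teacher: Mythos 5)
Your proposal is correct and follows the same overall architecture as the paper's proof: run the localization triangle over $\Spec\mathcal{A}$ (the paper phrases it as $j_!j^\ast\to\mathrm{id}\to i_\ast i^\ast$ in $DM_{gm}(\mathcal{A})$, you phrase it as the Gysin triangle for $P_{k}\hookrightarrow P$), handle the generic fiber by Lemma \ref{ReuctiveGroup} after the unramified base change that splits $G$, and reduce the special fiber to a product of $R_u(P_k)$ with a reductive group. The one genuine divergence is in how that product decomposition of $P_k$ is obtained. The paper invokes McNinch's theorem \cite{McN} to produce an actual Levi subgroup $\mathcal{L}\subset P_k$ with $\mathcal{L}\times R_u(P_k)\isoto P_k$; this is where the tame/unramified splitting hypothesis enters on the special-fiber side. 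You instead use the quotient map $P_k\to P_k/R_u(P_k)$ onto the reductive quotient and argue that this torsor is globally trivial, giving $P_k\cong R_u(P_k)\times M$ merely as $k$-varieties. That weaker, scheme-level splitting is all the Künneth argument needs, and your route is more elementary in that it bypasses McNinch entirely; but be careful with the justification: ``unipotent groups are special'' is false over imperfect fields, and what you actually need is that $R_u(P_k)$ is \emph{split} unipotent (true here because $k$ is perfect), so that the torsor over the affine base $M$ is trivial by dévissage through $\mathbb{G}_a$-torsors and $\Koh^1(M,\mathcal{O}_M)=0$. With that correction your argument for the special fiber is sound, and the remaining assembly (compositum of the two unramified extensions, closure of mixed Tate motives under the localization triangle, twists and duals) matches the paper's, which treats these points at essentially the same level of detail as you do.
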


\begin{proof} We consider the six functors introduced in \cite[section 1]{CD}, for the situation that:
$$
i: Speck \rightarrow Spec\mathcal{A},
$$
$$
j: SpecK \rightarrow Spec\mathcal{A}.
$$
Then we have the following distinguished triangle:
$$
j_{!}j^\ast \rightarrow id \rightarrow i_\ast i^\ast
$$
in $DM_{gm}(\mathcal{A})$. The generic fiber $P_K$ of $P$ is simply $G$, hence by lemma \ref{ReuctiveGroup} we know that $M(P_K)$ is geometrically mixed Tate. On the other hand since $G$ splits over a tamely ramified extension of $K$, we may argue by \cite{McN} that the special fiber $P_k$ has a Levi subgroup $\mathcal{L}$, in particular  $\mathcal{L} \times R_u(P_k) \rightarrow P_k$ is an isomorphism. Notice that the unipotent radical $R_u(P_k)$ is isomorphic to an affine space. Choose a finite extension of $k$ that splits $\mathcal{L}$. Since motive of the levi component becomes mixed Tate over this extension, thus also the motive associated to the special fiber $P_k$. Since $k$ is perfect and $\cA$ is henselian, this extension gives an \'etale cover of $\cA$. Now the assertion follows from the localization property, see \cite{CD} or \cite{Sch}.

\end{proof}

\section{Filtration on the motive of G-bundles}\label{Filtration on the motive of G-bundles}

\bigskip

In section \ref{SectMotiveofG-bundles} we studied the motive of a $G$-bundle over base scheme $X$, when the associated motive $M(X)$ is geometrically mixed Tate. In the sequel we produce some kind of filtration for the motive of $G$-bundles over more general base schemes in terms of incidence relation between faces of a convex body. Let us first recall the filtration on the motive of a split torus bundle constructed by B. Kahn and A. Huber. In \cite{H-K}, B. Kahn and A. Huber, as an application of the theory of \emph{slice filtration}, produce a filtration (in the triangulated category $DM_{gm}^{eff}(k)$) on the motive of a $T$-bundle $\mathcal{T}$, for a split torus $T$. Also they use this filtration to study motives of split reductive groups. Let us briefly recall their construction. 
\newline
Let $\mathcal{T}$ denote a principal $T$-bundle over a smooth variety $X \in \CO b(Sch_k)$, where $T$ is a split torus of rank $r$. Let $\Xi:= Hom(\mathbb{G}_m , T)$ be the cocharacter group. Then one can produce the following diagram of distinguished triangles in $DM_{gm}^{eff}(k)$ 
\bigskip

\[ \xygraph{
!{<0cm,0cm>;<1cm,0cm>:<0cm,1cm>::}
!{(3.5,0) }*+{M(\CT)}="a"
!{(0.5,0) }*+{\nu^{\geqslant 1}_X M(\CT)}="b"
!{(-2.5,0) }*+{\nu ^{\geqslant2}_X M(\CT)}="c"
!{(-5,0) }*+{\nu^{\geqslant p}_X M(\CT)}="m"
!{(-8,0) }*+{\nu^{\geqslant p+1}_X M(\CT)}="d"
!{(2.5,-0.7) }*+{[1]}="s"
!{(-0.5,-0.7) }*+{[1]}="t"
!{(-6,-0.7) }*+{[1]}="u"
!{(-2.5,-1) }*+{...}="l"
!{(3.5,-2) }*+{\lambda_0(X,T)}="f"
!{(0.5,-2) }*+{\lambda_1(X,T)}="g"
!{(-5,-2) }*+{\lambda_p(X,T)}="h"
"c":"b":"a"
"d":"m"
"a":"f" "b":"g" "m":"h" 
"f":"b" "g":"c" "h":"d"
}  \] \\

\bigskip

where $\lambda_p(X,T):= M(X)(p)[p] \otimes \Lambda^p(\Xi)$ for $0 \leq  p \leq  r$. Note that $M(\CT) \cong \nu_X^{\geqslant 0}M(\CT)$, $0 = \nu_X^{\geqslant r+1}M(\CT)$. \\
For more details on the construction of relative slice filters $\nu_X^{\geqslant p}M(\mathcal{T})$ see \cite[section 8]{H-K}.

\bigskip

Now using the method we introduced in section \ref{SectMotiveofG-bundles} we construct a nested filtration on the motive of a $G$-bundle.

\bigskip

Let $\cG$ be a $G$-bundle over $X$, where $G$  is a linear algebraic group. Let $\overline{G}$ be a compactification of $G$. Suppose that $D:= \overline{G} \setminus G$ form a mixed Tate configuration $D=\cup_{i=1}^m D_i$, such that $D^J:= \cap_{i \in J} D_i$ is either irreducible or empty  for any $J \subseteq \{ 1,\dots,m\}$. Assume that there exist a polytope whose faces correspond to those subsets $J\subseteq \{1,...,m\}$ such that $D^J$ is non empty (with face relation: $\CF_2$ is a face of $\CF_1$ if we have the inclusion $J_2\subseteq J_1$ of the corresponding sets). Let $\CP$ be the dual of this polytope. For each face $\cF$ of $\CP$, let $D_{\cF}$ denote the associated subvariety of $D$ regarding the above correspondence. For each $1 \leq r \leq m$, let $Q_r$ be the set consisting of all faces in $\CP$ of codimension $r$. Let $\partial \cF$ denote the boundary of $\CF$, i.e. $\partial \cF:= \{\cF \cap \mathcal{T} | \mathcal{T} \in Q_1\}\setminus\{\cF\}$.\\
Let $\overline{\cG}$ denote  the compactification $\cG \times^G \overline{G}$ of $\cG$ and let $\mathcal{D}_{\cF}$ be the associated $D_{\cF}$-fibration over $X$. We may form the following nested filtration on $M^c(\cG)$ by the distinguished triangles, indexed by codimension $r$ and faces $\cF\in Q_r$
 
\begin{equation}\label{Filt1}
\CD
M^c(\overline{\cG}\setminus \cG) \rightarrow M^c(\overline{\cG}) \rightarrow M^c(\cG)\\
\vdots\\
M^c (\bigcup_{ \cF \in Q_{r+1}} \mathcal{D}_{\cF}) \rightarrow M^c (\bigcup_{\cF \in Q_{r}} \mathcal{D}_{\cF}) \rightarrow \oplus_{\cF \in Q_r} M^c(\mathcal{D}_{\cF} \setminus \bigcup_{\cF' \in \partial \cF} \mathcal{D}_{\cF'}),\\
\vdots\\
\endCD
\end{equation}

and for each $\cF\in Q_r$ the triangle
$$
\CD
M^c (\bigcup_{\cF' \in \partial \cF} \mathcal{D}_{\cF'})) \rightarrow M^c ( \mathcal{D}_{\cF}) \rightarrow  M^c(\mathcal{D}_{\cF} \setminus \bigcup_{\cF' \in \partial \cF} \mathcal{D}_{\cF'}),
\endCD
$$

is the first line of a nested filtration obtained by replacing $\CP$ by $\cF$.

Note that this filtration is particularly interesting when $\mathcal{D}_{\mathcal{F}}$ is a cellular fibration. In this situation we may apply theorem \ref{Leray Hirsch for Voevodsky motives} to compute $ M^c(\mathcal{D}_{\cF})$. Let us recall two of such cases.

\begin{example}\label{TorusBundle}
Let $T$ be a split torus of rank $n$ and let $\mathcal{T}$ be a $T$-bundle over $X$. Consider a torus embedding of $T$ into the projective space $\mathbb{P}^n$ associated with the standard n-simplex $\Delta^n$. So we put $P:=\Delta^n$ in the above filtration. Note that in this case for each face $\cF \in \Delta^n$, $\mathcal{D}_{\cF}$ is a projective bundle and hence one may use the projective bundle formula \cite[Thm 15.11]{MVW} to compute $M^c(\mathcal{D}_{\cF})$. In particular when $M^c(X)$ is mixed Tate, using the above filtration, one may prove recursively that $M^c(\mathcal{T})$ is mixed Tate.
\end{example}

\begin{example}\label{ReductiveBundle}
Let $G$ be a semi-simple group of adjoint type and $\overline{G}$ its wonderful compactification. In this case the polytope $\CP$ is the one in proposition \ref{TheoWnderfulCompProperties}. Recall that for every face $\cF$ of $\CP$, $D_{\cF}$ admits a cell decomposition. Let us mention that for any regular compactification (see \cite{Brion} for details) of $G$ and any vertex $\cF$, $D_{\cF}$ is isomorphic to $G/B \times G/B$ and in particular $\mathcal{D}_{\cF}$ is a cellular fiberation.
\end{example}

--\textit{The case of 1-motives,} In practice it might happen that the motive of the base variety $X$ is far from being mixed Tate. Already it can happen for the case of 1-motives. Recall that the motive $M(C)$ of a curve $C$ decomposes
in $DM_{gm}^{eff}(k)\otimes \mathbb{Q}$ as follows
\begin{equation}\label{MotiveofCurves}
M(C) = M_0(C)\oplus M_1(C)\oplus M_2(C),
\end{equation}

where $M_i(C) := Tot LiAlb^{\mathbb{Q}}(C)[i]$. For the definition of $LiAlb^{\mathbb{Q}}(C)$ and detailed explanation of the theory we refer to section 3.12 of \cite{BaV-Kah}.\\

Let $C$ be a smooth projective curve. In the next example we consider the special case when the base scheme $X$ is a relative curve $C_S$ over $S$. 

\begin{example}\label{motive-of-G-bundle2}
Let $G$ be a reductive group over $k$. Let $\mathcal{G}$ be a $G$-bundle over $C$ and $\CG_s$ the associated $G^s$-bundle. Fix a closed point $p$ of $C$ and set $\dot{C}:= C \setminus \{p\}$. 
Assume that $\charakt k$ does not divide the order of $\pi_1(G)$ therefore by the weel-known theorem of Drinfeld and Simpson \cite{Drin-Simp} we may take a finite extension of $k$ which simultaneously trivializes the restriction of $\mathcal{G}_s$ over $\dot{C}$ and the fiber over $p$. Therefore we obtain the following distinguished triangle
$$
M(G^s \times \dot{C}_{k'})\rightarrow M(\mathcal{G}_{s,k'})\rightarrow M(G^s\times k')(n)[2n],
$$
and by the Kunneth theorem
$$
M(G^s) \otimes M(\dot{C}_{k'})\rightarrow M(\mathcal{G}_{s,k'})\rightarrow M(G^s\times k')(n)[2n].
$$
Let us assume that $G$ has a connected center. Since $G$ is reductive $Z:=Z(G)^{\circ}$ is a split torus. We may now apply either of the filtrations in example \ref{TorusBundle} or the relative slice filtration (see the first paragraph of this section) to the torus bundle $\mathcal{G} \rightarrow \mathcal{G}_s$. 
For instance from the latter filtration we get the following

\begin{enumerate}
\item[i)] A filtration $\{ \varphi_i : M_i \rightarrow M_{i-1} \}_{i \in \mathbb{N}}$ where $M_i:= \nu^{\geq i}_{\mathcal{G}_{s,k'}}M(\mathcal{G}_{k'})$. In particular $M_0=M(\mathcal{G}_{k'})$ and $M_r=0$ for $r>\rk Z(G)^{\circ}$ 
\item[ii)]The following sort of distinguished triangles
$$
M(\dot{C}_{k'}) \otimes M(G^s) \rightarrow M(\mathcal{G}_{s,k'}) \rightarrow M(G^s) \otimes M(k')(n)[2n]
$$
$$
M_{i+1} \rightarrow M_i \rightarrow M(\mathcal{G}_{s,k'})(i)[i] \otimes F_i,
$$ 
where $F_i$ be the $i$-th vedge power of the group $\Xi:=Hom(\mathbb{G}_m, Z)$.
\end{enumerate}

\end{example}

At the end it may look worthy to state the corresponding fact in the K-ring $K_0(Var_S)$ of varieties over $S$.\\ 
Recall that for a fibration $X\to Y$ with fiber $F$, locally trivial in Zariski topology, one has $[X]=[Y] . [F]$ where $[.]$ denotes a class in $K_0(Var_k)$, see \cite{Seb} (also see Guillet and Soul\'e \cite{GS}).  It can be shown that a similar fact (under certain assumption on the characteristic of $k$) holds for a $G$-bundle $\CG$ over the curve $C$ defined over an algebraically closed field $k$ (note however $\CG$ is not necessarily Zariski locally trivial).

\begin{proposition}\label{charcondition}
Let $G$ be a reductive group over $k$. Assume that $\charakt k$ does not divide the order of the fundamental group $\pi_1(G)$. For a $G$-bundle $\CG$ over a relative curve $C_S$ the class $[\cG]-[G\times _S C_S]$ in the Grothendieck ring of varieties $K_0(Var_S)$ lies in the kernel of the natural morphism $K_0(Var_S)\rightarrow K_0(Var_{S'})$ induced by an \'etale morphism $S'\to S$. In particular when $k$ is algebraically closed and $S=\Spec k$ then $[\CG]=[G].[C]$. \\
\end{proposition}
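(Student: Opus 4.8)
The plan is to follow the same strategy as the proof of Theorem \ref{motive-of-G-bundle1}, but working on the level of classes in the Grothendieck ring rather than with motives, and to combine it with the trivialization result of Drinfeld--Simpson used in Example \ref{motive-of-G-bundle2}. First I would reduce to the case where $G$ is semisimple of adjoint type: writing $\CG_s$ for the associated $G^s$-bundle and $\CG^{ad}$ for the associated $G^{ad}$-bundle, and using that $Z(G)^\circ$ is a torus (so that the intermediate bundles are Zariski-locally trivial by Hilbert 90), the multiplicativity of $[\cdot]$ for Zariski-locally trivial fibrations from \cite{Seb} lets me peel off the toric and central parts exactly as in the reductive-group step of Theorem \ref{motive-of-G-bundle1}; each such step contributes the class of a product, i.e. changes $[\CG]-[G\times_S C_S]$ by something that is visibly compatible with \'etale base change $S'\to S$ because it is built from $[G']\cdot[C_S]$-type terms for various subquotients $G'$.

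Next, for $G$ semisimple of adjoint type over the relative curve $C_S$, I would embed $\CG$ into its wonderful compactification $\overline{\CG}=\CG\times^G\overline{G}$ over $C_S$, and use the $G\times G$-orbit stratification of $\overline{G}$ from Proposition \ref{TheoWnderfulCompProperties}. Since each orbit closure $D_\cF$ is cellular (char.\ $0$) or motivic cellular, and more to the point each locally closed orbit is, set-theoretically over $\overline{k}$, a bijective image of an affine space (Renner), the corresponding fibrations $\CD_\cF$ over $C_S$ decompose in $K_0(Var_S)$ — after passing to the reduced structure and using the cell bundle structure — into sums of classes of the form $\mathbb{L}^{n_\cF}\cdot[\text{(cellular }C_S\text{-bundle)}]$. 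The scissor relations in $K_0(Var_S)$ then give
\[
[\overline{\CG}]=[\CG]+\sum_{\cF}[\CD_\cF\setminus(\cdots)],
\]
and both $[\overline{\CG}]$ and the boundary terms, being classes of Zariski-locally trivial cellular fibrations over $C_S$ (by the Leray--Hirsch input of Theorem \ref{Leray Hirsch for Voevodsky motives} transported to $K_0$, or directly by the cell bundle structure), are of the form (class of cellular variety)$\cdot[C_S]$ plus the same for $G\times_S C_S$ — so the difference $[\CG]-[G\times_S C_S]$ is a $\mathbb{Z}[\mathbb{L}]$-combination of classes $[C_S]-[C_S]$ that vanish, were it not for the fact that $\CG$ itself is only generically trivial, not globally trivial.

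That last point is where the Drinfeld--Simpson theorem \cite{Drin-Simp} enters and is the real content: under the hypothesis that $\charakt k\nmid|\pi_1(G)|$, after a finite (separable) extension $k'/k$ the bundle $\CG_s$ becomes trivial on $\dot C_{k'}=C_{k'}\setminus\{p\}$ and its fiber at $p$ becomes trivial, and $C$ being a curve, $\CG$ restricted to $\dot C$ is then Zariski-locally trivial — indeed trivial — so I have the scissor decomposition $[\CG]=[\CG|_{\dot C_{S'}}]+[\CG|_{\{p\}\times S'}]=[G\times_{S'}\dot C_{S'}]+[G\times S']=[G\times_{S'}C_{S'}]$ after the base change $S'\to S$ corresponding to $k'/k$. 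Hence $[\CG]-[G\times_S C_S]$ maps to $0$ in $K_0(Var_{S'})$, which is the assertion; and when $k=\overline k$ is algebraically closed and $S=\Spec k$ no extension is needed, so $[\CG]=[G]\cdot[C]$, using multiplicativity of $[\cdot]$ for the (now Zariski-locally trivial on $\dot C$, trivial at $p$) bundle once more. The main obstacle is organizing the reduction so that the \'etale-base-change statement — rather than an outright equality over $S$ — is what gets proved: one must be careful that the "finite extension of $k$" furnished by Drinfeld--Simpson is exactly an \'etale cover of $S$ (it is, since it is a finite separable extension of the ground field pulled back to $S$), and that every intermediate scissor/multiplicativity step is functorial under that base change, which it is because all the pieces are explicit products with $C_S$ or cellular bundles over $C_S$.
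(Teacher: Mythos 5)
Your proposal is essentially the paper's own argument: reduce to the semisimple case by combining Hilbert~90 (torus bundles are Zariski-locally trivial) with the Gillet--Soul\'e multiplicativity of classes of Zariski-locally trivial fibrations in $K_0(Var_S)$, and then conclude by Drinfeld--Simpson trivialization over $\dot C$ after an \'etale base change $S'\to S$ together with the scissor relation $[\CG]=[\CG|_{\dot C_{S'}}]+[\CG|_{p}]$. The entire middle paragraph on the wonderful compactification and the orbit stratification is an unnecessary detour that you yourself concede does not close the argument (and the paper does not use it here); the proof stands on the first and last paragraphs alone.
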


\begin{proof}
According to the theorem Hilbert 90 torus bundles are locally trivial for Zariski topology, thus by the result of Guillet and Soul\'e \cite{GS}, which we mentioned above, we can reduce to the case that $G=G^s$ is semi-simple. Then the proposition follows from the theorem of Drinfeld and Simpson (cf. example \ref{motive-of-G-bundle2}).
\end{proof}

%
%

{\small

\vfill

\begin{minipage}[t]{0.5\linewidth}
\noindent
Somayeh Habibi\\
Dipartimento di Matematica\\ 
``Federigo Enriques"\\ 
Universita degli Studi di Milano\\
Via C.Saldini,50 \\
20133 Milano
\\ Italy
\\[1mm]
\end{minipage}
\begin{minipage}[t]{0.45\linewidth}
\noindent
M. Esmail Arasteh Rad\\
Universit\"at M\"unster\\
Mathematisches Institut \\
Einsteinstr.~62\\
48149 M\"unster
\\ Germany
\\[1mm]

\end{minipage}


\begin{thebibliography}{GHKR2}
\addcontentsline{toc}{section}{References}

\bibitem[B-K]{BaV-Kah} L.\ Barbieri Viale, B.\ Kahn: \emph{On the derived category of 1-motives} http://arxiv.org/abs/1009.1900

\bibitem[Big]{Big} S.\ Biglari, Motives of Reductive Groups, Thesis, Leipzig 2004.

\bibitem[Br]{Brion} M.\ Brion: \emph{The behaviour at infinity of the Bruhat decomposition}. \emph{Comment.\ Math.\ Helv.} 73(1998),137-174.

\bibitem[BT]{B-T} F.\ Bruhat, J.\ Tits: Groupes r\'eductifs sur un corps local. Inst. Hautes \'Etudes Sci. Publ. Math. 41
(1972), 5–251.

\bibitem[BTII]{B-TII} F.\ Bruhat, J.\ Tits: Groupes r\'eductifs sur un corps local. II. Sch\'emas en groupes. Existence d\'une donn\'e radicielle valu\'ee. Inst. Hautes \'Etudes Sci. Publ. Math. 60 (1984), 197–376.


\bibitem[CD]{CD} D.\ C.\ Cisinski and F.\ D´eglise: \emph{Triangulated categories of motives}. Preprint, 2010.
\bibitem[C-F]{Col-Ful} A. \ Collino and W. \ Fulton, \emph{Intersection rings of spaces of triangles}, M\'em.
Soc.~Math.~France~(N.S.) 38 (1989), 75–117.

\bibitem[CGM]{CGM} V.\ Chernousov, S.\ Gille, A.\ Merkurjev: \emph{Motivic decomposition of isotropic projective homogeneous varieties}, Duke Math. J. 126 (2005), no. 1, 137–159
 
\bibitem[DP]{Con-Pro} C.\ De Concini, C.\ and Procesi: \emph{Complete symmetric varieties}, Invariant theory (Montecatini,1982), volume 996 of Lecture Notes in Math., pages 1–44, Springer, Berlin, 1983.

\bibitem[DPII]{DeConciniProcesi} C.\ DeConcini and C.\ Procesi: \emph{Complete symmetric varieties}, Lecture Notes in Math. 996, Springer, 1973, 1-44.

\bibitem[DS]{Drin-Simp} V.\ Drinfeld and C.\ Simpson: \emph{B-structures on G-bundles and local triviality},
 Math.-Res.-Lett. 2 (1995) 823-829.
 
\bibitem[EJ]{EvJo} S.\ Evens, B.\ Jones: \emph{On the Wonderful Compactification}, preprint

\bibitem[GS]{GS} H.\ Guillet and C.\ Soul\'e: \emph{Descent, motives and K-theory}. J.\ reine angew.\ Math., 4, (1996).

\bibitem[HK]{H-K} A.\ Huber , B.\ Kahn: \emph{The slice filtration and mixed Tate motives}, \emph{Compos.\ Math.} 2006;142(4):907-936.

\bibitem[Kah]{BKahn} B.\ Kahn : \emph{Motivic Cohomology of smooth geometrically cellular varieties}, Algebraic K-theory,
Seattle, 1997, Proceedings of Symposia in Pure Mathematics, vol. 67 (American Mathematical Society, Providence, RI, 1999), 149–174. 

\bibitem[Kar]{Kar} N.\ A.\ Karpenko: \emph{Cohomology of relative cellular spaces and of isotropic flag varieties}, Algebra i Analiz 12 (2000), no. 1, 3–69. MR 2001 c:14076

\bibitem[Kir]{Kiritchenko}  V.\ Kiritchenko: \emph{On intersection indices of subvarieties in reductive groups} Moscow Mathematical Journal, 7 no.3, 489-505, (2007).


\bibitem[Lam]{LAM} T.\ Y.\ Lam: \emph{Serre's Problem on Projective modules}, Springer Monographs in Mathematics, (2006).

\bibitem[LS]{Las-Sor} Y.\ Laszlo and C.\ Sorger: \emph{The line bundles on the moduli of parabolic G-bundles
over curves and their sections}. \emph{Ann. Sci. Ecole Norm. Sup. (4)}, 30(4):499-525, (1997). 

\bibitem[McN]{McN} G.\ J.\ McNinch: \emph{Levi decompositions of a linear algebraic group}, to appear: Transformation Groups (Morozov centennial issue). 

\bibitem[MVW]{MVW} C.\ Mazza, V.\ Voevodsky, C.\ A.\ Weibel. \emph{Lecture notes on motivic cohomology}, Clay mathematics monographs, v.2., (2006).

\bibitem[Rag1]{Raghunathan1} M.\ S.\ Raghunathan: \emph{Principal bundles on affine space and bundles on the projective line} \emph{Mathematische Annalen}(1989) Volume 285, Number 2, 309-332.

\bibitem[Rag2]{Raghunathan2} M.\ S.\ Raghunathan: \emph{Principal bundles on affine space}, in C. P. Ramanujam—a tribute, pp. 187–206, Tata Inst. Fund. Res. Studies in Math. 8 (1978).

\bibitem[Ren]{Renner} L.\ E.\ Renner: \emph{An Explicit Cell Decomposition of the Wonderful Compactification of a Semisimple Algebraic Group}, \emph{Canadian Mathematical Bulletin}, 46 (1): 140-148, (2003).

\bibitem[Sch]{Sch} J.\ Scholbach: \emph{Mixed Artin-Tate motives over number rings}, \emph{Journal of Pure and Applied Algebra} 215, 2106–2118,  (2011).

\bibitem[Seb]{Seb}
J.\ Sebag: \emph{Int\'egration motivique sur les sch\'emas formels}, Bull. Soc.
Math. France 132, no. 1, 1-54, (2004).

\bibitem[Tim]{Tima} D.\ Timashev, \emph{Equivariant compactifications of reductive groups}, Sb. Math. 194, no. 3-4, 589–616, (2003).

\bibitem[VSF]{VV}V.\ Voevodsly, A.\ Suslin, E.M.\ Friedlander: \emph{Cyles, Transfers, and Motivic Homology Theories}, Princeton university press, (2000)


\bibitem[Voe1]{VVI} V.\ Voevodsky: \emph{Motives over simplicial schemes}: J. K-Theory 5,
no. 1, 1–38, (2010).

\bibitem[Voe2]{VVII} V.\ Voevodsky: \emph{Homology of Schemes}, Selecta Mathematica, New Series, 2(1):111–153, (1996).
\end{thebibliography}
\end{document}